\newtheorem{theorem}{Theorem}
\newtheorem{lemma}{Lemma}
\newtheorem{cor}{Corollary}
\numberwithin{equation}{section}
\newcommand{\abs}[1]{\left\vert#1\right\vert}
\newcommand{\C}{\mbox{$\mathbb{C}$}}
\newcommand{\D}{\mbox{$\mathbb{D}$}}
\begin{document}
\setcounter{page}{1}

\title[Initial successive coefficients]
{Initial successive coefficients for certain classes of univalent functions involving the\\ exponential function}

\author[L. Shi]{Lei Shi}
\address[Lei Shi]{School of Mathematics and Statistics, Anyang Normal University, Anyang 455002, Henan, P. R. China.}
\email{shimath$@$163.com}

\author[Z.-G. Wang]{Zhi-Gang Wang}
\address[Zhi-Gang Wang]{School of Mathematics and Computing Science, Hunan First Normal University, Changsha 410205, Hunan, P. R. China.}
\email{wangmath$@$163.com}

\author[R.-L. Su]{Ren-Li Su}
\address[Ren-Li Su]{School of Mathematics and Statistics, Changsha University of Science and Technology;  Hunan Provincial Key Laboratory of Mathematical Modeling and Analysis in Engineering,
       Changsha 410114, Hunan,   P. R. China.}
\email{1579206189$@$qq.com}

\author[M. Arif]{Muhammad Arif}
\address[Muhammad Arif]{Department of Mathematics, Abdul Wali Khan University, Mardan 23200, Pakistan.}
\email{marimaths$@$awkum.edu.pk}

\subjclass[2010]{Primary 30C45; Secondary 30C80.}

\keywords{Univalent function; exponential function; successive coefficient.}

\date{}

\begin{abstract}
 Let $\mathcal{S}$ denote the family of all functions that are analytic and univalent in the unit disk $\D:=\{z:\abs{z}<1\}$
and satisfy $f(0)=f^{\prime}(0)-1=0$. In the present paper, we consider certain subclasses of univalent functions associated with the exponential function, and obtain the sharp upper bounds on the initial coefficients and the difference of  initial successive coefficients for functions
belonging to these classes.
\end{abstract} \maketitle
\section{Introduction}

Let $\mathcal{A}$ denote the class of functions $f$ of the form
\begin{equation}\label{f1}
f(z)=z+\sum_{n=2}^{\infty}a_{n}z^{n},
\end{equation} which are  analytic in
the unit disk $\D:=\{z:\abs{z}<1\}$
and satisfy the conditions $f(0)=f^{\prime}(0)-1=0$. Let $\mathcal{S}$ be the set of all functions $f\in\mathcal{A}$
that are univalent in $\D$. Let $\mathcal{S}^{*}$ and $\mathcal{K}$ denote the subclasses of $\mathcal{S}$  consisting of starlike functions and convex functions, respectively.

Let $\mathcal{P}$ denote the class of all functions $\mathfrak{p}(z)$ analytic and having positive real part
in $\D$, with the form
\begin{equation}
\mathfrak{p}(z)=1+\sum_{n=1}^{\infty}p_{n}z^{n}.
\end{equation}

For two functions $f$ and $g$, analytic in ${\D}$, we say that the
function $f$ is subordinate to $g$ in ${\D}$, and write
$$f(z)\prec g(z)\quad (z\in {\D}),$$ if there exists a Schwarz
function $\omega$, which is analytic in ${\D}$ with
$$\omega(0)=0\ {\rm and}\ \abs{\omega(z)}<1 \ (z \in {\D})$$ such that
$$f(z)=g\big(\omega(z)\big)\quad (z\in{\D}).$$

Using the subordination relationship, Ma and Minda \cite{ma} introduced the class of Ma-Minda type of starlike functions $\mathcal{S}^{*}(\phi)$, which is defined by
\begin{equation}
\mathcal{S}^{*}(\phi):=\left\{f\in\mathcal{A}\ {\rm{and}} \ \frac{zf^{\prime}(z)}{f(z)}\prec\phi(z)\right\},
\end{equation}
where $\phi(z)$ is analytic and univalent in $\D$ and for which $\phi(\D)$ is convex with $\phi(z)\in\mathcal{P}$ for $z\in\D$.

For a constant $\lambda$ with $0<\lambda\leq \frac{\pi}{2}$, by setting
\begin{equation}
\phi(z)=e^{\lambda z}\quad(z\in\D),
\end{equation}
we have the class $\mathcal{S}_{\lambda e}^{*}$ which is defined  by the condition
\begin{equation}\label{a1}
\mathcal{S}_{\lambda e}^{*}:=\left\{z\in\mathcal{A}\quad{\rm{and}}\quad\frac{zf^{\prime}(z)}{f(z)}\prec e^{\lambda z}\right\}\quad(z\in\D).
\end{equation}

It can be seen that the condition \eqref{a1} is equivalent to
\begin{equation}
\abs{\log \frac{zf^{\prime}(z)}{f(z)}}\leq \lambda\quad(z\in\D).
\end{equation}

Also,  we denote by $\mathcal{K}_{\lambda e}$ the class of functions $f\in\mathcal{A}$ satisfying the condition
\begin{equation}\label{a2}
1+\frac{zf^{\prime\prime}(z)}{f^{\prime}(z)}\prec e^{\lambda z}\quad(z\in\D).
\end{equation}

Let $z=re^{i\theta}$, $r\in[0,1)$, $\theta\in [0,2\pi]$, we have $\Re\left(e^{\lambda z}\right)=e^{\lambda r\cos\theta}\cos(\lambda r\sin\theta)$.
It is clear that $\cos(\lambda r\sin\theta)>0$ for $\lambda\in (0,\frac{\pi}{2}]$ and thus $\Re\left(e^{\lambda z}\right)>0\, (z\in\D)$. In view of this fact, the class $\mathcal{S}_{\lambda e}^{*}$ is a subclass of starlike functions $\mathcal{S^{*}}$ and $\mathcal{K}_{\lambda e}$ is a subclass of convex functions $\mathcal{K}$.

By choosing $\lambda=1$, we obtain the families $\mathcal{S}_{e}^{*}$ and $\mathcal{K}_{e}^{*}$ which were introduced and investigated by Mediratta {\it et al.} \cite{me} and were later studied by many authors, see \cite{KU,SU,AD,SH,HU,pa,ZH} and the references cited therein. Clearly, for $0<\zeta\leq1$ and $1\leq\eta\leq\frac{\pi}{2}$, we have
\begin{equation*}
\mathcal{S}_{\zeta e}^{*}\subseteq \mathcal{S}_{e}^{*}\subseteq \mathcal{S}_{\eta e}^{*}.
\end{equation*}

In recent years, the difference of the moduli of successive coefficients of a
function $f\in\mathcal{S}$ has attracted many researchers' attention (see \cite{HAY,LEU,YE,YE2}). Because of the triangle inequality $\abs{\abs{a_{n+1}}-\abs{a_{n}}}\leq\abs{a_{n+1}-a_{n}}$, sometimes it maybe useful to study the upper bounds of $\abs{a_{n+1}-a_{n}}$ for some refined subclasses of starlike and convex functions to obtain the upper bound of $\abs{\abs{a_{n+1}}-\abs{a_{n}}}$. In \cite{ROB}, Robertson proved that $\abs{a_{n+1}-a_{n}}\leq\frac{2n+1}{3}\abs{a_{2}-1}$ for all $f\in\mathcal{K}$. Recently,  Li and Sugawa \cite{li} studied the related problem of maximizing the function $\abs{\abs{a_{n+1}}-\abs{a_{n}}}$ with the help of  $\abs{a_{n+1}-a_{n}}$ for convex functions $f$ with $f^{\prime\prime}(0)=p$ for a prescribed $p\in [0,2]$. For some subclasses of analytic univalent functions, the sharp upper bounds of $\abs{a_3-a_2}$ and $\abs{a_4-a_3}$ were obtained by Peng and Obrandovi\'{c} \cite{peng}.

Motivated essentially by the above work, in the present paper, we aim at proving some results on the upper bounds of the initial coefficients and the difference of initial successive coefficients for $f$ belonging to the classes $\mathcal{S}_{\lambda e}^{*}$ and $\mathcal{K}_{\lambda e}$.
\section{Preliminary results}

To derive our main results, we need the following lemmas.

 \begin{lemma}{\rm{(See \cite{pr})}}\label{Lem001}
Let $\omega(z)=\sum_{n=1}^{\infty}c_{k}z^{k}$ be a Schwarz function. Then, for any real number $\mu$ and $\nu$ the following sharp estimate holds
\begin{equation}
\Psi(\omega)=\abs{c_{3}+\mu c_{1}c_{2}+\nu c_{1}^3}\leq \Phi(\mu,\nu),
\end{equation}
where $\Phi(\mu,\nu)$ is given in complete form in {\rm [\cite{pr},Lemma 2]}, and here we will only use
 \begin{equation}
\Phi(\mu,\nu)\leq\left\{\begin{array}{lccc}1 &(\mu,\nu)\in D_{1},\\ \\
\frac{2}{3}\left(\abs{\mu}+1\right)\left(\frac{\abs{\mu}+1}{3\abs{\mu}+1+\nu}\right)^{1/2} &(\mu,\nu)\in D_{2}\bigcup D_{3},\\ \\
\frac{1}{3}\nu\left(\frac{\mu^2-4}{\mu^2-4\nu}\right)\left(\frac{\mu^2-4}{3(\nu-1)}\right)^{1/2} &(\mu,\nu)\in D_{4},\\ \\
\abs{\nu}  &(\mu,\nu)\in D_{5},
\end{array}\right.
 \end{equation}
 where
 \begin{equation*}\begin{split}
&D_{1}=\left\{(\mu,\nu):\abs{\mu}\leq\frac{1}{2},-1\leq\nu\leq1\right\},\\
&D_{2}=\left\{(\mu,\nu):\frac{1}{2}\leq\abs{\mu}\leq2,-\frac{2}{3}(\abs{\mu}+1)\leq\nu\leq\frac{4}{27}\left((\abs{\mu}+1)^3-(\abs{\mu}+1)\right)\right\}, \\
&D_{3}=\left\{(\mu,\nu):\abs{\mu}\geq2,-\frac{2}{3}(\abs{\mu}+1)\leq\nu\leq\frac{2\abs{\mu}(\abs{\mu}+1)}{\mu^2+2\abs{\mu}+4}\right\},\\
&D_{4}=\left\{(\mu,\nu):2\leq\abs{\mu}\leq4,\frac{2\abs{\mu}(\abs{\mu}+1)}{\mu^2+2\abs{\mu}+4}\leq\nu\leq\frac{1}{12}(\mu^2+8)\right\}\setminus\left\{(2,1)\right\},\\
&D_{5}=\left\{(\mu,\nu):2\leq\abs{\mu}\leq4,\nu\geq\frac{1}{12}(\mu^2+8)\right\}.
  \end{split}
  \end{equation*}
 \end{lemma}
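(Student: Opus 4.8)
The plan is to follow the classical variational method of Prokhorov and Szynal for coefficient functionals of Schwarz functions. First I would exploit the symmetry of the problem: replacing $\omega(z)$ by $\overline{\omega(\bar z)}$ sends each $c_k$ to $\overline{c_k}$, and since $\mu$ and $\nu$ are real this leaves $\abs{\Psi(\omega)}$ unchanged, so nothing is lost in normalising the arguments of the extremal configuration. The essential input is the Schur-type parametrisation of the first three coefficients of a Schwarz function: one has $\abs{c_1}\le 1$, and there exist $x,y\in\overline{\D}$ with
\begin{equation*}
c_2=(1-\abs{c_1}^2)\,x,\qquad c_3=(1-\abs{c_1}^2)\big[(1-\abs{x}^2)\,y-\overline{c_1}\,x^2\big].
\end{equation*}
These relations encode exactly the constraints that make $\omega$ admissible (for $c_1=0$ they recover the Schwarz--Pick bound $\abs{c_3}\le 1-\abs{c_2}^2$), and they convert the estimate into a constrained optimisation over a closed polydisc.

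Next I would substitute this parametrisation into $\Psi(\omega)=c_3+\mu c_1c_2+\nu c_1^3$ and pass to moduli. Writing $t=\abs{c_1}\in[0,1]$ and $s=\abs{x}\in[0,1]$ and choosing the arguments of $c_1$, $x$ and $y$ so as to align the three contributing terms (this is precisely where the reality of $\mu,\nu$ enters), the triangle inequality reduces $\abs{\Psi}$ to a real-valued function $F(t,s)$ on the unit square, the $y$-contribution being pushed to its maximal value $(1-t^2)(1-s^2)$ via $\abs{y}\le 1$. The sharp constant $\Phi(\mu,\nu)$ is then $\max_{[0,1]^2}F$.

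Finally I would carry out the two-variable maximisation of $F$. The maximum sits either at an interior critical point, on an edge of the square ($t=0$, $s=0$, or $s=1$), or at a corner, and which of these occurs is governed by the location of $(\mu,\nu)$; this is the origin of the piecewise description and of the partition into $D_1,\dots,D_5$. In $D_1$ the free term $c_3$ dominates and the bound is $1$; in $D_2\cup D_3$ and in $D_4$ an interior or edge critical point produces the two square-root expressions; in $D_5$ the term $\nu c_1^3$ dominates and the maximum is $\abs{\nu}$, attained at $t=1$.

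I expect the genuine obstacle to be exactly this case analysis: confirming that each candidate critical point is admissible and globally (not merely locally) maximal, and pinning down the precise inequalities on $(\mu,\nu)$ that separate the regimes, which is why the complete form of $\Phi(\mu,\nu)$ is lengthy. For the present paper only the five listed cases are needed, and each reduces to checking one sign condition and evaluating $F$ at the corresponding configuration, with sharpness witnessed by an explicit finite Blaschke-type Schwarz function saturating the triangle-inequality steps. Since the full statement is proved in [\cite{pr}, Lemma~2], I would simply quote it in the forms required here.
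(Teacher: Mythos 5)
The paper offers no proof of this lemma at all---it is quoted directly from Prokhorov and Szynal \cite{pr}---and your proposal ultimately does the same thing, deferring to [\cite{pr}, Lemma 2] for the full statement. Your preliminary sketch (the Schur-algorithm parametrisation $c_2=(1-\abs{c_1}^2)x$, $c_3=(1-\abs{c_1}^2)[(1-\abs{x}^2)y-\overline{c_1}x^2]$, reduction by the triangle inequality to a two-variable maximisation on $[0,1]^2$, and the case analysis producing $D_1,\dots,D_5$) is an accurate outline of the method used in that reference, so your route coincides with the paper's.
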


\begin{lemma}\label{0Lem1}{\rm{(See \cite{LIB,LIB2})}}
Let $-2\leq p_1\leq 2 $ and $p_2,\, p_3\in\C$. Then there  exists a function $\mathfrak{p}\in\mathcal{P}$ with
\begin{equation}\label{b1}
\mathfrak{p}(z)=1+p_1z+p_2z^2+p_3z^3+\cdots
\end{equation}
if and only if
\begin{equation}\label{b2}
2p_2=p_1^2+(4-p_1^2)x
\end{equation}
and
\begin{equation}\label{b3}
4p_{3}=p_{1}^3+2(4-p_{1}^2)p_{1}x-(4-p_{1}^2)p_{1}x^2+2(4-p_{1}^2)(1-{\abs{x}}^2)y
\end{equation}
for some $x,y\in\C$ with $\abs{x}\leq1$ and $\abs{y}\leq1$.
\end{lemma}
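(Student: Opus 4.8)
The plan is to prove both implications by passing to the associated Schwarz function and applying the Schur algorithm to its Taylor coefficients. First I would set
\begin{equation*}
\omega(z)=\frac{\mathfrak{p}(z)-1}{\mathfrak{p}(z)+1},
\end{equation*}
which is a Schwarz function exactly when $\mathfrak{p}\in\mathcal{P}$. Writing $\omega(z)=c_1z+c_2z^2+c_3z^3+\cdots$ and expanding $\mathfrak{p}=(1+\omega)/(1-\omega)=1+2\omega+2\omega^2+2\omega^3+\cdots$, a comparison of coefficients gives the purely algebraic dictionary
\begin{equation*}
p_1=2c_1,\qquad p_2=2c_2+2c_1^2,\qquad p_3=2c_3+4c_1c_2+2c_1^3 .
\end{equation*}
In particular the hypothesis that $p_1$ is real with $-2\le p_1\le2$ is precisely the requirement $c_1=p_1/2\in[-1,1]$.

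The heart of the argument is the parametrization of the coefficients $c_1,c_2,c_3$ of a Schwarz function. Here I would run the Schur algorithm: put $f_1(z)=\omega(z)/z$, which maps $\D$ into $\overline{\D}$ by the Schwarz lemma, and iterate
\begin{equation*}
f_{k+1}(z)=\frac{1}{z}\cdot\frac{f_k(z)-\gamma_k}{1-\overline{\gamma_k}\,f_k(z)},\qquad \gamma_k=f_k(0),
\end{equation*}
obtaining Schur parameters $\gamma_1,\gamma_2,\gamma_3$ with $\abs{\gamma_k}\le1$. Computing the first one or two Taylor coefficients at each stage yields
\begin{equation*}
c_1=\gamma_1,\qquad c_2=(1-\abs{\gamma_1}^2)\gamma_2,\qquad c_3=(1-\abs{\gamma_1}^2)\big[(1-\abs{\gamma_2}^2)\gamma_3-\overline{\gamma_1}\gamma_2^2\big].
\end{equation*}
Conversely, any triple $\gamma_1,\gamma_2,\gamma_3\in\overline{\D}$ arises in this way from a genuine Schwarz function (for instance a suitable finite Blaschke product produced by reversing the algorithm), so the three parameters vary independently over the closed disc; this independence is what powers the ``if'' direction.

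Since $c_1=p_1/2$ is real, we have $\overline{\gamma_1}=p_1/2$ and $1-\abs{\gamma_1}^2=(4-p_1^2)/4$. Substituting the expressions for $c_2,c_3$ into the dictionary and writing $x=\gamma_2$, $y=\gamma_3$, the relation $p_2=2c_2+2c_1^2$ collapses at once to $2p_2=p_1^2+(4-p_1^2)x$, while $4p_3=8c_3+16c_1c_2+8c_1^3$ reduces, after inserting these values, to the stated cubic expression for $4p_3$, with $\abs{x}\le1$ and $\abs{y}\le1$. Read forwards this is the ``only if'' direction (each $\mathfrak{p}$ produces admissible $x,y$); read backwards, and using the independence of the Schur parameters to realize prescribed $p_2,p_3$, it gives the ``if'' direction.

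The step I expect to be the main obstacle is the degenerate boundary case $\abs{p_1}=2$, i.e. $\abs{\gamma_1}=1$. Then $1-\abs{\gamma_1}^2=0$, the Schur transform $f_2$ is undefined, and the Schwarz lemma forces $\omega(z)=\gamma_1 z$, whence $c_2=c_3=0$ and $\mathfrak{p}(z)=(1+\gamma_1 z)/(1-\gamma_1 z)$. One must check separately that the two claimed relations persist: the prefactors $4-p_1^2$ kill the $x$- and $y$-terms, leaving $2p_2=p_1^2$ and $4p_3=p_1^3$, which indeed hold, and $x,y$ may then be chosen arbitrarily in $\overline{\D}$. Everything else is routine bookkeeping of power series.
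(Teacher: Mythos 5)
Your proof is correct, but note that the paper itself does not prove this lemma: it is imported verbatim from Libera and Z{\l}otkiewicz \cite{LIB,LIB2}, so the real comparison is with the classical argument in those references rather than with anything in the text. The classical route runs through Carath\'eodory--Toeplitz theory: membership of $(p_1,p_2,p_3)$ in the coefficient body of $\mathcal{P}$ is encoded by positive semidefiniteness of the associated Toeplitz forms, and the two displayed identities for $2p_2$ and $4p_3$ are the resulting parametrizations of the second and third coefficient regions. Your route through the Schur algorithm is genuinely different and self-contained, and the computations check out: the dictionary $p_1=2c_1$, $p_2=2c_2+2c_1^2$, $p_3=2c_3+4c_1c_2+2c_1^3$, the Schur-parameter formulas $c_2=(1-\abs{\gamma_1}^2)\gamma_2$ and $c_3=(1-\abs{\gamma_1}^2)\bigl[(1-\abs{\gamma_2}^2)\gamma_3-\overline{\gamma_1}\gamma_2^2\bigr]$, and the back-substitution with $c_1=p_1/2$, $x=\gamma_2$, $y=\gamma_3$ reproduce exactly the two formulas in the statement. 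What your approach buys is constructiveness: reversing the algorithm yields explicit Blaschke-product realizations of any admissible pair $(x,y)$, which is precisely what is needed to exhibit extremal functions for the sharpness claims elsewhere in the paper; the Toeplitz route is shorter if one takes the Carath\'eodory--Toeplitz theorem as known, but it is less explicit. One point to tighten: your claim that the three Schur parameters ``vary independently over the closed disc'' needs the same care at $\abs{\gamma_2}=1$ that you gave at $\abs{\gamma_1}=1$ (i.e.\ $\abs{p_1}=2$). If $\abs{x}=1$ the algorithm terminates and $\gamma_3$ does not exist; this is harmless, because the factor $1-\abs{x}^2$ multiplying $y$ vanishes, so every choice of $y$ is realized vacuously --- but it should be said, exactly as you said it for the boundary case of $p_1$.
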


\begin{lemma}\label{0lem2}{\rm{(See \cite{li})}}
For given real numbers $a,b,c$, let
 \begin{equation}\label{yabc}
Y(a,b,c)=\max_{z\in\overline{{\small\D}}}(\abs{a+bz+cz^2}+1-\abs{z}^2).
 \end{equation}
If $a\geq0$ and $c\geq0$, then
\begin{equation}\label{Y}
Y(a,b,c)=\left\{\begin{array}{lc}a+\abs{b}+c &\abs{b}\geq2(1-c),\\ \\
1+a+\frac{b^2}{4(1-c)} &\abs{b}\leq2(1-c).
\end{array}\right.
\end{equation}
The maximum in the definition of $Y(a,b,c)$ is attained at $z=\pm1$ in the first case
according as $b=\pm\abs{b}$.
 \end{lemma}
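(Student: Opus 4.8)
The plan is to reduce the two-dimensional maximization over $\overline{\D}$ to a one-variable calculus problem by first optimizing over the argument of $z$ and then over its modulus. Writing $z=re^{i\theta}$ with $r\in[0,1]$ and $\theta\in[0,2\pi)$, the radial term $1-\abs{z}^2=1-r^2$ depends only on $r$, so the whole task is to understand, for each fixed $r$, the quantity $M(r):=\max_{\theta}\abs{a+bz+cz^2}$ and then to maximize $M(r)+1-r^2$ over $r\in[0,1]$.

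The key step is the inner maximization over $\theta$, and this is where the hypotheses $a\ge0$ and $c\ge0$ are used. A direct computation, using $\cos\theta\cos2\theta+\sin\theta\sin2\theta=\cos\theta$ and $\cos2\theta=2\cos^2\theta-1$, gives
\begin{equation*}
\abs{a+bz+cz^2}^2=4acr^2\,t^2+2br(a+cr^2)\,t+\bigl(a^2+b^2r^2+c^2r^4-2acr^2\bigr),\qquad t:=\cos\theta\in[-1,1].
\end{equation*}
Since the leading coefficient $4acr^2$ is nonnegative, this is a convex function of $t$ on $[-1,1]$, so its maximum is attained at an endpoint $t=\pm1$, that is, at $z=\pm r$ on the real axis. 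Evaluating there and using $a+cr^2\ge0$ yields
\begin{equation*}
M(r)=\max\bigl(\abs{a+br+cr^2},\,\abs{a-br+cr^2}\bigr)=a+\abs{b}\,r+cr^2.
\end{equation*}

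It remains to maximize $H(r):=M(r)+1-r^2=(1+a)+\abs{b}\,r-(1-c)r^2$ over $r\in[0,1]$, an elementary one-variable problem. When $1-c\le0$ the function $H$ is nondecreasing, so the maximum is $H(1)=a+\abs{b}+c$, which falls under the first case of the statement since then $\abs{b}\ge0\ge2(1-c)$. When $1-c>0$, $H$ is a downward parabola with vertex at $r^{\ast}=\abs{b}/(2(1-c))$, and the two cases of the statement correspond exactly to $r^{\ast}\ge1$ (equivalently $\abs{b}\ge2(1-c)$), where the maximum is again $H(1)=a+\abs{b}+c$, versus $r^{\ast}<1$, where it equals $H(r^{\ast})=1+a+b^2/(4(1-c))$. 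A quick check shows the two formulas agree on the common boundary $\abs{b}=2(1-c)$. Finally, in the first case the maximizing point is $z=1$ or $z=-1$ according to whether the endpoint realizing $M(1)=a+\abs{b}+c$ is $t=+1$ or $t=-1$, i.e. according as $b=+\abs{b}$ or $b=-\abs{b}$, which is the asserted location of the extremizer. The main (and essentially only) obstacle is the phase-reduction identity above; once convexity in $t=\cos\theta$ is secured from $a,c\ge0$, the remainder is routine calculus.
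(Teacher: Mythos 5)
Your proof is correct, but note that there is no ``paper proof'' to compare it with: the paper states this lemma without proof, quoting it directly from Li and Sugawa \cite{li}. Your argument is therefore a genuinely self-contained alternative to the citation, and it holds up under checking. The key reduction is sound: writing $z=re^{i\theta}$ and $t=\cos\theta$, the identity
\begin{equation*}
\abs{a+bz+cz^2}^2=4acr^2t^2+2br(a+cr^2)t+\bigl(a^2+b^2r^2+c^2r^4-2acr^2\bigr)
\end{equation*}
is verified by direct expansion (the cross term collapses via $\cos\theta\cos2\theta+\sin\theta\sin2\theta=\cos\theta$), the hypotheses $a\geq0$, $c\geq0$ enter exactly where you say they do (nonnegative leading coefficient $4acr^2$, hence convexity in $t$ and endpoint maxima, with the endpoint values being $(a\pm br+cr^2)^2$), and $a+cr^2\geq0$ then gives $\max_{\theta}\abs{a+bz+cz^2}=a+\abs{b}r+cr^2$. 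The remaining one-variable analysis of $H(r)=(1+a)+\abs{b}r-(1-c)r^2$ on $[0,1]$ is complete: you handle the degenerate case $c\geq1$ (where the first alternative applies automatically since $2(1-c)\leq0\leq\abs{b}$), the vertex-versus-endpoint dichotomy matches the dichotomy $\abs{b}\lessgtr2(1-c)$, the two formulas agree on the boundary $\abs{b}=2(1-c)$, and the extremizer $z=\pm1$ according as $b=\pm\abs{b}$ is identified. What your approach buys is independence from the external reference by an entirely elementary computation; what the paper's citation buys is brevity, deferring to Li and Sugawa's original treatment. No gaps.
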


 \begin{lemma}{\rm{(See \cite{ro})}}\label{Lem02}
If $\mu(z)=1+\sum_{k=1}^{\infty}\mu_{k}z^{k}$ is subordinate to $\nu=1+\sum_{k=1}^{\infty}\nu_k z^k$ in $\D$, where $\nu(z)$ is univalent in $\D$ and $\nu(\D)$ is convex, then
\begin{equation}
\abs{\mu_n}\leq \abs{\nu_{1}}\quad (n\geq1).
\end{equation}
 \end{lemma}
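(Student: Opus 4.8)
The plan is to prove this classical subordination estimate (essentially Rogosinski's theorem) by reducing the bound on the $n$-th coefficient to the case $n=1$, where it is immediate from the Schwarz lemma, and by using the convexity of $\nu(\D)$ to carry out that reduction. First I would record the two consequences of subordination I will need: since $\nu$ is univalent, $\mu\prec\nu$ is equivalent to $\mu=\nu\circ\omega$ for some Schwarz function $\omega$, and in particular $\mu(\D)\subseteq\nu(\D)$. For the base case $n=1$, writing $\mu=\nu\circ\omega$ gives $\mu_1=\mu'(0)=\nu'(0)\,\omega'(0)=\nu_1\,\omega'(0)$, and the Schwarz lemma yields $\abs{\omega'(0)}\le 1$, hence $\abs{\mu_1}\le\abs{\nu_1}$. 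Note that convexity plays no role in this case.

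The core step is a symmetrization argument that reduces general $n$ to the situation above. Fix $n\ge 1$, put $\zeta=e^{2\pi i/n}$, and define $F(z)=\frac1n\sum_{j=0}^{n-1}\mu(\zeta^j z)$. Using $\frac1n\sum_{j=0}^{n-1}\zeta^{jk}=1$ when $n\mid k$ and $0$ otherwise, a term-by-term computation shows $F(z)=1+\mu_n z^n+\mu_{2n}z^{2n}+\cdots$; that is, $F$ keeps exactly those coefficients of $\mu$ whose index is a multiple of $n$. For each fixed $z\in\D$ every value $\mu(\zeta^j z)$ lies in $\mu(\D)\subseteq\nu(\D)$, and since $\nu(\D)$ is convex the average $F(z)$ again lies in $\nu(\D)$. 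Together with $F(0)=1=\nu(0)$ and the univalence of $\nu$, this gives $F\prec\nu$.

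Finally I would unfold the $n$-fold symmetry. Since $F$ is a power series in $z^n$, there is an analytic $G$ with $G(z^n)=F(z)$, namely $G(w)=1+\mu_n w+\mu_{2n}w^2+\cdots$; because $z\mapsto z^n$ maps $\D$ onto $\D$ we have $G(\D)=F(\D)\subseteq\nu(\D)$ and $G(0)=1=\nu(0)$, so $G\prec\nu$ as well. The first Taylor coefficient of $G$ is $\mu_n$, so applying the $n=1$ estimate already established to $G\prec\nu$ yields $\abs{\mu_n}\le\abs{\nu_1}$, which is the claim.

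The hard part will not be any computation but the one structural point that makes the argument work: the justification that the averaged function $F$ lands inside $\nu(\D)$ and is therefore subordinate to $\nu$. This is precisely where the hypothesis that $\nu(\D)$ is convex is indispensable—without it the convex combination $\frac1n\sum_j\mu(\zeta^j z)$ need not remain in the image domain, and the reduction collapses. Everything else is bookkeeping on the power series plus a single application of the Schwarz lemma.
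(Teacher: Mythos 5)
Your proposal is correct, and it is essentially the classical symmetrization proof of Rogosinski's theorem. There is, however, nothing in the paper to compare it against: the authors state Lemma~\ref{Lem02} as a known result, citing Rogosinski \cite{ro}, and give no proof, so your argument stands on its own. All the essential steps check out: the root-of-unity average $F(z)=\frac1n\sum_{j=0}^{n-1}\mu(\zeta^jz)$ retains exactly the coefficients $\mu_{mn}$; convexity of $\nu(\D)$ places each value $F(z)$ in $\nu(\D)$ (this is indeed the only place convexity is used); univalence of $\nu$ upgrades the pair of conditions $F(0)=\nu(0)$ and $F(\D)\subseteq\nu(\D)$ to the subordination $F\prec\nu$, via $\omega_F=\nu^{-1}\circ F$; and the de-symmetrized function $G$ with $G(z^n)=F(z)$ has first coefficient $\mu_n$, so the Schwarz-lemma case $n=1$ finishes the proof. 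The one detail worth spelling out is the analyticity of $G$ on all of $\D$: defining $G(w)=1+\mu_nw+\mu_{2n}w^2+\cdots$, the convergence of $F$ on $\D$ gives $\limsup_{m\to\infty}\abs{\mu_{mn}}^{1/(mn)}\leq1$, hence $\limsup_{m\to\infty}\abs{\mu_{mn}}^{1/m}\leq1$, so this series has radius of convergence at least $1$; the surjectivity of $z\mapsto z^n$ on $\D$ then gives $G(\D)=F(\D)$ as you claim. With that line added, the proof is complete and rigorous.
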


 The proof of the following lemma is similar to that of Lemma 2.2 in \cite{wsy}.

 \begin{lemma}\label{Lem5}
Suppose  that the sequence $\{A_{m}\}_{m=2}^{\infty}$ is defined by
 \begin{equation}\label{26}
 \begin{cases}
\displaystyle A_{m}=\lambda&\left(m=2\right),\\
\displaystyle A_{m}=\frac{\lambda}{m-1}\left(1+\sum_{k=2}^{m-1}A_{k}\right)&\left(m\geq3\right).
 \end{cases}
 \end{equation}
 Then
 \begin{equation}
 A_{m}=\frac{1}{(m-1)!}\prod_{k=0}^{m-2}(\lambda+k)\quad\left(m\geq2\right).
 \end{equation}
 \end{lemma}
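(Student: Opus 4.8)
The plan is to reduce the cumulative-sum recurrence \eqref{26} to a two-term recurrence that can be telescoped. The only awkward feature of \eqref{26} is that $A_m$ depends on the whole partial sum $1+\sum_{k=2}^{m-1}A_k$; the key observation is that this dependence can be stripped away by differencing two consecutive instances of the defining relation.

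First I would clear the denominator. For $m\geq 3$ the defining relation reads $(m-1)A_m=\lambda\bigl(1+\sum_{k=2}^{m-1}A_k\bigr)$. Writing the same identity with $m$ replaced by $m-1$ (valid for $m\geq 4$) gives $(m-2)A_{m-1}=\lambda\bigl(1+\sum_{k=2}^{m-2}A_k\bigr)$. Subtracting the second from the first makes the sums collapse to the single term $A_{m-1}$, leaving $(m-1)A_m-(m-2)A_{m-1}=\lambda A_{m-1}$, that is
\begin{equation}
A_m=\frac{\lambda+m-2}{m-1}\,A_{m-1}\qquad(m\geq 4).
\end{equation}
A direct check from \eqref{26} shows $A_3=\tfrac{\lambda}{2}(1+A_2)=\tfrac{\lambda(\lambda+1)}{2}=\tfrac{\lambda+1}{2}A_2$, so the very same two-term recurrence in fact holds for all $m\geq 3$.

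With the two-term recurrence in hand, the result follows by telescoping from the base value $A_2=\lambda$:
\begin{equation}
A_m=A_2\prod_{j=3}^{m}\frac{\lambda+j-2}{j-1}=\lambda\cdot\frac{(\lambda+1)(\lambda+2)\cdots(\lambda+m-2)}{2\cdot 3\cdots(m-1)}=\frac{1}{(m-1)!}\prod_{k=0}^{m-2}(\lambda+k),
\end{equation}
which is the asserted formula (the empty product convention recovering $A_2=\lambda$ at $m=2$). Equivalently, one may verify the closed form by strong induction, substituting the assumed expressions for $A_2,\dots,A_{m-1}$ into $1+\sum_{k=2}^{m-1}A_k$ and simplifying; the differencing route above is simply a cleaner way to perform the same cancellation.

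I expect essentially no serious obstacle here: the entire content is the recognition that the running sum can be removed by subtracting consecutive relations, after which the problem reduces to the standard telescoping of a ratio recurrence whose solution is a rising factorial, $A_m=(\lambda)_{m-1}/(m-1)!$. The only point needing care is that the differencing identity is justified only for $m\geq 4$, so the cases $m=3$ and the base case $m=2$ must be confirmed directly, as done above.
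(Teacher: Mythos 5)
Your proposal is correct and follows essentially the same route as the paper: both difference two consecutive instances of the cleared recurrence $(m-1)A_m=\lambda\bigl(1+\sum_{k=2}^{m-1}A_k\bigr)$ to collapse the running sum into the two-term ratio $A_{m+1}/A_m=(\lambda+m-1)/m$, and then telescope from $A_2=\lambda$. Your explicit verification of the $m=3$ case (where the differencing identity is not yet available) is a small point of extra care that the paper handles implicitly via the empty-sum convention.
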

\begin{proof} From \eqref{26}, we have
 \begin{equation}\label{28}
 (m-1)A_{m}=\lambda\left(1+\sum_{k=2}^{m-1}A_{k}\right)
 \end{equation}
  and
  \begin{equation}\label{29}
 mA_{m+1}=\lambda\left(1+\sum_{k=2}^{m}A_{k}\right).
 \end{equation}
 Combining \eqref{28}  and \eqref{29}, we find that
  \begin{equation}
  \frac{A_{m+1}}{A_{m}}=\frac{\lambda+m-1}{m}\quad\left(m\geq2\right).
  \end{equation}
 Thus,
 \begin{equation}\begin{split}
 A_{m}&=\frac{A_{m}}{A_{m-1}}\cdot\frac{A_{m-1}}{A_{m-2}}\cdot\cdot\cdot\frac{A_{3}}{A_{2}}\cdot A_{2}\\
 &=\frac{\lambda+m-2}{m-1}\cdot\frac{\lambda+m-3}{m-2}\cdot\cdot\cdot\frac{\lambda+1}{2}\cdot\lambda\\
 &=\frac{1}{(m-1)!}\prod_{k=0}^{m-2}(\lambda+k)\quad\left(m\geq3\right).
 \end{split}
 \end{equation}
In conjunction with \eqref{26}, we complete the proof of Lemma \ref{Lem5}.
 \end{proof}

\vskip.05in

\section{Main results}
We first discuss the absolute values of the second, third and fourth coefficient of functions in the class $\mathcal{S}_{\lambda e}^{*}$.
 \begin{theorem}\label{tt1}
Suppose that $f(z)=z+\sum_{n=2}^{\infty}a_{n}z^n\in\mathcal{S}_{\lambda e}^{*}$. Then
 \begin{equation}
\abs{a_{2}}\leq\lambda,
 \end{equation}
 \begin{equation}\label{0aa3}
\abs{a_{3}}\leq\left\{\begin{array}{lc}\frac{1}{2}\lambda &0<\lambda\leq\frac{2}{3},\\ \\
\frac{3}{4}\lambda^2 &\frac{2}{3}<\lambda\leq\frac{\pi}{2},
\end{array}\right.
 \end{equation}
 and
  \begin{equation}\label{0aa4}
 \abs{a_{4}}\leq\left\{\begin{array}{lccc}\frac{1}{3}\lambda &\lambda\in\left(0,\frac{1}{5}\right],\\ \\
\frac{1}{9}\lambda(5\lambda+2)\left(\frac{30\lambda+12}{17\lambda^2+90\lambda+12}\right)^{1/2}  &\lambda\in\left(\frac{1}{5}, r_{0}\right],\\ \\
\frac{17}{252}(25\lambda^2-16)\left(\frac{25\lambda^2-16}{17\lambda^2-12}\right)^{1/2}  &\lambda\in\left(r_{0},\sqrt{\frac{32}{43}}\,\right],\\ \\
\frac{17}{36}\lambda^3 &\lambda\in\left(\sqrt{\frac{32}{43}},\frac{\pi}{2}\right],
\end{array}\right.
 \end{equation}
where $r_{0}\thickapprox 0.8602$ is the positive root of the equation
\begin{equation}\label{eq1}
425\lambda^3+340\lambda^2-328\lambda-240=0.
\end{equation}
All the bounds are sharp.
\end{theorem}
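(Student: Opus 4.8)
The plan is to realize the subordination \eqref{a1} through a Schwarz function and then read off the coefficients. Since $\frac{zf'(z)}{f(z)}\prec e^{\lambda z}$, there is a Schwarz function $\omega(z)=c_1z+c_2z^2+c_3z^3+\cdots$ with $\frac{zf'(z)}{f(z)}=e^{\lambda\omega(z)}$. First I would expand the left-hand side as $1+a_2z+(2a_3-a_2^2)z^2+(3a_4-3a_2a_3+a_2^3)z^3+\cdots$ and the right-hand side as $1+\lambda c_1z+(\lambda c_2+\tfrac{\lambda^2}{2}c_1^2)z^2+(\lambda c_3+\lambda^2c_1c_2+\tfrac{\lambda^3}{6}c_1^3)z^3+\cdots$. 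Matching coefficients and solving successively gives
\begin{equation*}
a_2=\lambda c_1,\qquad a_3=\frac{\lambda}{2}\Bigl(c_2+\frac{3\lambda}{2}c_1^2\Bigr),\qquad a_4=\frac{\lambda}{3}\Bigl(c_3+\frac{5\lambda}{2}c_1c_2+\frac{17\lambda^2}{12}c_1^3\Bigr).
\end{equation*}
These three identities are the backbone of the whole argument.

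The estimate $\abs{a_2}\le\lambda$ is then immediate from $\abs{c_1}\le1$. For $a_3$ I would use the classical Schwarz-coefficient inequality $\abs{c_2}\le1-\abs{c_1}^2$, which yields
\begin{equation*}
\abs{a_3}=\frac{\lambda}{2}\Bigl\lvert c_2+\frac{3\lambda}{2}c_1^2\Bigr\rvert\le\frac{\lambda}{2}\Bigl(1+\Bigl(\frac{3\lambda}{2}-1\Bigr)\abs{c_1}^2\Bigr),
\end{equation*}
whose maximum over $\abs{c_1}\in[0,1]$ equals $\tfrac{\lambda}{2}$ when $\lambda\le\tfrac23$ (attained at $c_1=0$) and $\tfrac{3\lambda^2}{4}$ when $\lambda>\tfrac23$ (attained at $\abs{c_1}=1$). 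This splits \eqref{0aa3} at $\lambda=\tfrac23$.

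The fourth coefficient is the heart of the matter. Writing $a_4=\tfrac{\lambda}{3}\bigl(c_3+\mu c_1c_2+\nu c_1^3\bigr)$ with $\mu=\tfrac{5\lambda}{2}$ and $\nu=\tfrac{17\lambda^2}{12}$, Lemma \ref{Lem001} gives $\abs{a_4}\le\tfrac{\lambda}{3}\Phi(\mu,\nu)$, so everything reduces to locating the curve $\lambda\mapsto(\mu,\nu)=(\tfrac{5\lambda}{2},\tfrac{17\lambda^2}{12})$ among $D_1,\dots,D_5$ as $\lambda$ increases. The condition $\abs{\mu}\le\tfrac12$ places the point in $D_1$ for $\lambda\le\tfrac15$, where $\Phi\le1$ gives the first line. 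For $\tfrac15<\lambda\le\tfrac45$ one has $\tfrac12\le\abs{\mu}\le2$ and checks $\nu\le\tfrac{4}{27}\bigl((\abs{\mu}+1)^3-(\abs{\mu}+1)\bigr)$, i.e. $D_2$; the curve then enters $D_3$ for $\lambda>\tfrac45$. On $D_2\cup D_3$ the common bound $\tfrac23(\abs{\mu}+1)\bigl(\tfrac{\abs{\mu}+1}{3\abs{\mu}+1+\nu}\bigr)^{1/2}$ applies, and substituting $\mu,\nu$ and simplifying produces the second line of \eqref{0aa4}. The point leaves $D_3$ exactly where $\nu=\tfrac{2\abs{\mu}(\abs{\mu}+1)}{\mu^2+2\abs{\mu}+4}$; clearing denominators turns this equality into the cubic \eqref{eq1}, whose positive root is $r_0$. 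On $D_4$ the branch $\tfrac13\nu\bigl(\tfrac{\mu^2-4}{\mu^2-4\nu}\bigr)\bigl(\tfrac{\mu^2-4}{3(\nu-1)}\bigr)^{1/2}$ gives the third line. Finally the transition $D_4\to D_5$ occurs on the line $\nu=\tfrac1{12}(\mu^2+8)$, which simplifies to $43\lambda^2=32$, i.e. $\lambda=\sqrt{32/43}$; beyond it $\Phi=\abs{\nu}$ and $\abs{a_4}\le\tfrac{\lambda}{3}\cdot\tfrac{17\lambda^2}{12}=\tfrac{17}{36}\lambda^3$, the last line.

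For sharpness I would exhibit an extremal $f$ in each regime by taking the Schwarz function that saturates Lemma \ref{Lem001} there: $\omega(z)=z^3$ (so $\frac{zf'(z)}{f(z)}=e^{\lambda z^3}$) forces equality in the first line, $\omega(z)=z$ (so $\frac{zf'(z)}{f(z)}=e^{\lambda z}$) forces equality in the last line and in the $\lambda>\tfrac23$ case of \eqref{0aa3}, while $\omega(z)=z^2$ settles the $\lambda\le\tfrac23$ case; the two middle regimes are witnessed by the corresponding extremal Schwarz functions of Prokhorov–Szynal. The main obstacle I anticipate is not any single estimate but the region bookkeeping for $a_4$: one must verify the secondary constraints on $\nu$ (not merely the bands on $\abs{\mu}$) throughout each $\lambda$-subinterval and confirm that the four pieces match at the breakpoints $\lambda=\tfrac15,\,r_0,\,\sqrt{32/43}$, which is precisely what forces the appearance of the cubic \eqref{eq1} and of $\sqrt{32/43}$.
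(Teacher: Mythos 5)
Your proposal is correct and follows essentially the same route as the paper: the same Schwarz-function coefficient identities $a_2=\lambda c_1$, $a_3=\frac{\lambda}{2}(c_2+\frac{3}{2}\lambda c_1^2)$, $a_4=\frac{\lambda}{3}(c_3+\frac{5}{2}\lambda c_1c_2+\frac{17}{12}\lambda^2 c_1^3)$, the Carlson inequality $\abs{c_2}\leq 1-\abs{c_1}^2$ for $a_3$, and Lemma \ref{Lem001} with $\mu=\frac{5}{2}\lambda$, $\nu=\frac{17}{12}\lambda^2$ and the identical $D_1$--$D_5$ bookkeeping for $a_4$, including the cubic \eqref{eq1} at the $D_3$/$D_4$ transition and $\sqrt{32/43}$ at the $D_4$/$D_5$ transition. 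Your explicit extremal Schwarz functions $\omega(z)=z,\ z^2,\ z^3$ for sharpness actually go slightly beyond the paper, whose proof derives only the upper bounds and leaves sharpness to the sharpness assertion of Lemma \ref{Lem001}.
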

\begin{proof}
Let $f\in \mathcal{S}_{\lambda e}^{*}$. Then we can write \eqref{a1} in terms of Schwarz function as
\begin{equation*}
\frac{zf^{\prime}(z)}{f(z)}=e^{\lambda\omega(z)}.
\end{equation*}
From \eqref{f1}, we can get
\begin{equation}\label{zf1}
\frac{zf^{\prime}(z)}{f(z)}=1+a_2z+(2a_3-{a_2}^2)z^2+(3a_4-3a_2a_3+{a_2}^3)z^3+\cdots.
\end{equation}
Suppose that
\begin{equation*}
\omega(z)=\sum_{n=1}^{\infty}c_{n}z^{n}.
\end{equation*}
From the series expansion of $e^{\lambda\omega}$ along with some calculations, we have
\begin{equation}\label{e1w}
e^{\lambda\omega(z)}=1+\lambda c_1z +\left(\frac{\lambda^2}{2} {c_1}^{2}+\lambda c_{2}\right)z^2+\left(\frac{\lambda^3}{6}c_{1}^{3}+\lambda^2 c_1c_{2}+\lambda c_3\right)z^3+\cdots.
\end{equation}
Comparing \eqref{zf1} with \eqref{e1w}, we have
\begin{equation}\label{0bb2}
a_2=\lambda c_1,
\end{equation}
\begin{equation}\label{0bb3}
a_3=\frac{\lambda}{2}\left(c_{2}+\frac{3}{2}\lambda c_{1}^2\right),
\end{equation}
and
\begin{equation}\label{0bb4}
a_4=\frac{\lambda}{3}\left(c_3+\frac{5}{2}\lambda c_{1}c_{2}+\frac{17}{12}\lambda^2 c_{1}^3\right).
\end{equation}
Since $\omega$ is a Schwarz function, we have $\abs{c_{1}}\leq1$. Hence, we obtain that
\begin{equation}
\abs{a_2}\leq \lambda.
 \end{equation}
Using a result of Carleson \cite{ca} (see also \cite{go}), we have $\abs{c_{2}}\leq1-\abs{c_{1}}^2$.  By virtue of \eqref{0bb3}, we find that
\begin{equation}\label{0a3}
\abs{a_3}\leq \frac{\lambda}{2}\left[1+(\frac{3}{2}\lambda-1)\abs{c_{1}}^2\right].
 \end{equation}
Then the inequality \eqref{0aa3} follows from \eqref{0a3} with $\abs{c_{1}}\in [0,1]$.

To obtain the sharp bound of $\abs{a_4}$, we will use Lemma \ref{Lem001} in the following. Let $\mu=\frac{5}{2}\lambda$, $\nu=\frac{17}{12}\lambda^2$. Suppose that $0<\lambda\leq\frac{1}{5}$,  we see that $(\mu,\nu)\in D_{1}$. From Lemma \ref{Lem001}, we have
 \begin{equation}
\abs{c_3+\frac{5}{2}\lambda c_{1}c_{2}+\frac{17}{12}\lambda^2}\leq1.
 \end{equation}
 By means of \eqref{0bb4}, we obtain
 \begin{equation}\label{1a4}
\abs{a_4}\leq\frac{\lambda }{3}.
 \end{equation}
For $\frac{1}{5}<\lambda\leq\frac{4}{5}$, it is clear that $\frac{1}{2}<\abs{\mu}\leq2$ and $\nu\geq-\frac{2}{3}(\abs{\mu}+1)$. Since
\begin{equation*}
\nu\leq\frac{4}{27}\left((\abs{\mu}+1)^3-(\abs{\mu}+1)\right)
\end{equation*}
is equivalent to
\begin{equation*}
750\lambda^2+441\lambda+240\geq0,
\end{equation*}
it is easily seen that $(\mu,\nu)\in D_{2}$ for $\frac{1}{5}<\lambda\leq\frac{4}{5}$.
Thus, an application of Lemma \ref{Lem001} leads to
 \begin{equation}\label{2a4}
\abs{a_4}\leq\frac{\lambda}{9}(5\lambda+2)\left(\frac{30\lambda+12}{17\lambda^2+90\lambda+12}\right)^{1/2}.
 \end{equation}
Now, we suppose that $\frac{4}{5}<\lambda\leq\frac{\pi}{2}$. It is not hard to verify that $2\leq\mu<4$. Since $\nu\leq\frac{1}{12}(\mu^2+8)$ is equivalent to $\lambda^2\leq\frac{32}{43}$, we have
  \begin{equation*}
\nu>\frac{1}{12}(\mu^2+8)
 \end{equation*}
 for $\lambda>\sqrt{\frac{32}{43}}\thickapprox0.8627$. This implies that
 $(\mu,\nu)\in D_{5}$ for $\lambda\in\left(\sqrt{\frac{32}{43}},\frac{\pi}{2}\right]$. Using Lemma \ref{Lem001}, yields to
  \begin{equation}\label{3a4}
\abs{a_4}\leq\frac{17}{36}\lambda^3.
 \end{equation}
For $\frac{4}{5}<\lambda\leq\sqrt{\frac{32}{43}}$, it is clear that $\nu\leq\frac{1}{12}(\mu^2+8)$. A simple calculation shows that
\begin{equation*}
v\geq\frac{2\abs{\mu}(\abs{\mu}+1)}{\mu^2+2\abs{\mu}+4}
 \end{equation*}
 is equivalent to
 \begin{equation*}
425\lambda^3+340\lambda^2-328\lambda-240\geq0.
 \end{equation*}
 Let $g(\lambda)=425\lambda^3+340\lambda^2-328\lambda-240$. The numerical computation  shows that the only positive root of $g(\lambda)=0$ is $r_{0}\thickapprox0.8602$. For $\frac{4}{5}<\lambda\leq r_{0}$, we have $g(\lambda)\leq0$ and hence $
v\leq\frac{2\abs{\mu}(\abs{\mu}+1)}{\mu^2+2\abs{\mu}+4}$. This means that $(\mu,\nu)\in D_{3}$ and we thus have
  \begin{equation}\label{4a4}
\abs{a_4}\leq\frac{\lambda}{9}(5\lambda+2)\left(\frac{30\lambda+12}{17\lambda^2+90\lambda+12}\right)^{1/2}.
 \end{equation}
 For $\lambda\in \left(r_{0},\sqrt{\frac{32}{43}}\,\right]$, we have $g(\lambda)\geq0$ and hence $v\geq\frac{2\abs{\mu}(\abs{\mu}+1)}{\mu^2+2\abs{\mu}+4}$. Obviously, $\mu=\frac{5}{2}\lambda\neq2$. Therefore, we know that $(\mu,\nu)\in D_{4}$ and
 \begin{equation}\label{5a4}
\abs{a_4}\leq\frac{17}{252}(25\lambda^2-16)\left(\frac{25\lambda^2-16}{17\lambda^2-12}\right)^{1/2}.
 \end{equation}
By virtue of \eqref{1a4}, \eqref{2a4}, \eqref{3a4}, \eqref{4a4} and \eqref{5a4}, we obtain \eqref{0aa4}. This completes the proof of Theorem \ref{tt1}.
\end{proof}

From the definitions of $\mathcal{S}_{\lambda e}^{*}$ and  $\mathcal{K}_{\lambda e}$, we know that if $f\in \mathcal{K}_{\lambda e}$, then $zf^{\prime}\in \mathcal{S}_{\lambda e}^{*}$. We thus get the following result.
\begin{theorem}\label{tt2}
If $f(z)=z+\sum_{n=2}^{\infty}a_{n}z^n\in\mathcal{K}_{\lambda e}$, then
 \begin{equation}
\abs{a_{2}}\leq\frac{1}{2}\lambda,
 \end{equation}
 \begin{equation}\label{0aa3}
\abs{a_{3}}\leq\left\{\begin{array}{lc}\frac{1}{6}\lambda &0<\lambda\leq\frac{2}{3},\\ \\
\frac{1}{4}\lambda^2 &\frac{2}{3}<\lambda\leq\frac{\pi}{2},
\end{array}\right.
 \end{equation}
 and
  \begin{equation}\label{0aa41}
 \abs{a_{4}}\leq\left\{\begin{array}{lccc}\frac{1}{12}\lambda &\lambda\in\left(0,\frac{1}{5}\right],\\ \\
\frac{1}{36}\lambda(5\lambda+2)\left(\frac{30\lambda+12}{17\lambda^2+90\lambda+12}\right)^{1/2}  &\lambda\in\left(\frac{1}{5}, r_{0}\right],\\ \\
\frac{17}{1008}(25\lambda^2-16)\left(\frac{25\lambda^2-16}{17\lambda^2-12}\right)^{1/2}  &\lambda\in\left(r_{0},\sqrt{\frac{32}{43}}\,\right],\\ \\
\frac{17}{144}\lambda^3 &\lambda\in\left(\sqrt{\frac{32}{43}},\frac{\pi}{2}\right],
\end{array}\right.
 \end{equation}
where $r_{0}\thickapprox 0.8602$ is the positive root of the equation \eqref{eq1}.
All the bounds are sharp.
\end{theorem}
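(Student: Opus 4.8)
The plan is to deduce Theorem \ref{tt2} directly from Theorem \ref{tt1} by means of the Alexander-type relation recorded just before the statement: if $f\in\mathcal{K}_{\lambda e}$ and we set $g(z)=zf'(z)$, then $g\in\mathcal{S}_{\lambda e}^{*}$. First I would record the coefficient correspondence. Writing $f(z)=z+\sum_{n=2}^{\infty}a_{n}z^{n}$, a direct differentiation gives
\begin{equation*}
g(z)=zf'(z)=z+\sum_{n=2}^{\infty}n\,a_{n}z^{n},
\end{equation*}
so that if $g(z)=z+\sum_{n=2}^{\infty}b_{n}z^{n}$ then $b_{n}=n\,a_{n}$ for every $n\geq2$.

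Since $g\in\mathcal{S}_{\lambda e}^{*}$, Theorem \ref{tt1} furnishes sharp upper bounds for $\abs{b_{2}}$, $\abs{b_{3}}$ and $\abs{b_{4}}$ equal to the right-hand sides displayed there. I would then divide each bound by the relevant integer: $\abs{a_{2}}=\abs{b_{2}}/2\leq\lambda/2$, while the piecewise bounds for $\abs{a_{3}}=\abs{b_{3}}/3$ and $\abs{a_{4}}=\abs{b_{4}}/4$ follow by multiplying the corresponding bounds in Theorem \ref{tt1} by $1/3$ and $1/4$. One checks that $\tfrac13\cdot\tfrac12=\tfrac16$, $\tfrac13\cdot\tfrac34=\tfrac14$, $\tfrac14\cdot\tfrac13=\tfrac1{12}$, $\tfrac14\cdot\tfrac{17}{252}=\tfrac{17}{1008}$ and $\tfrac14\cdot\tfrac{17}{36}=\tfrac{17}{144}$, reproducing the constants asserted in the statement. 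Crucially, the breakpoints $\tfrac15,\tfrac23,r_{0},\sqrt{32/43}$ in $\lambda$ are unchanged, since the subdivision of the parameter range in Theorem \ref{tt1} depends only on which region $D_{1},\ldots,D_{5}$ the pair $(\mu,\nu)=(\tfrac52\lambda,\tfrac{17}{12}\lambda^{2})$ occupies, and this classification is intrinsic to $\lambda$ and insensitive to the rescaling $b_{n}\mapsto b_{n}/n$.

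For sharpness I would transfer the extremal functions of Theorem \ref{tt1} through the inverse Alexander transform. If $g_{0}\in\mathcal{S}_{\lambda e}^{*}$ realizes equality in a given bound for $\abs{b_{n}}$, then
\begin{equation*}
f_{0}(z)=\int_{0}^{z}\frac{g_{0}(t)}{t}\,dt
\end{equation*}
lies in $\mathcal{K}_{\lambda e}$, its $n$-th coefficient equals $b_{n}/n$, and hence it attains equality in the corresponding bound for $\abs{a_{n}}$. Thus every bound is sharp, witnessed by the same Schwarz-function data that made Theorem \ref{tt1} sharp.

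The computations here are entirely routine, so I do not expect a genuine obstacle; the only points demanding care are the bookkeeping of the four regimes for $\abs{a_{4}}$, namely confirming that the factor $1/4$ distributes correctly across the four branches with the region boundaries preserved, and verifying that the functions produced by the inverse Alexander transform indeed remain in $\mathcal{K}_{\lambda e}$.
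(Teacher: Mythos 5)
Your proposal is correct and matches the paper's own argument exactly: the paper derives Theorem \ref{tt2} from Theorem \ref{tt1} precisely via the Alexander relation $f\in\mathcal{K}_{\lambda e}\Rightarrow zf'\in\mathcal{S}_{\lambda e}^{*}$, with the coefficient rescaling $b_n=na_n$ (the paper states this in one line and omits the bookkeeping you spell out). Your additional verification of the constants, the invariance of the $\lambda$-breakpoints, and the transfer of extremal functions through the inverse Alexander transform are all sound and simply make explicit what the paper leaves implicit.
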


From Theorem \ref{tt1}, we know that $\frac{1}{2}\abs{f^{\prime\prime}(0)}=\abs{a_2}\leq \lambda$ for $f=z+\sum_{n=2}^{\infty}a_n z^n \in \mathcal{S}_{\lambda e}^{*}$. Now, let
\begin{equation}
\mathcal{S}_{\lambda e}^{*}(\hat{p}):=\left\{f\in \mathcal{S}_{\lambda e}^{*}, f^{\prime\prime}(0)= \hat{p} \right\},
\end{equation}
where $\hat{p}$ is a given real number satisfying $-2\lambda\leq \hat{p}\leq 2\lambda$.

In what follows, we will discuss the difference of  initial successive coefficients for functions in $\mathcal{S}_{\lambda e}^{*}(\hat{p})$.

 \begin{theorem}\label{ttt2}
Let
$0\leq \hat{p}\leq 2\lambda$ and $p=\hat{p}/\lambda$. Suppose that  $f(z)=z+\sum_{n=2}^{\infty}a_{n}z^{n}\in\mathcal{S}_{\lambda e}^{*}(\hat{p})$. Then the following sharp inequalities
 \begin{equation}\label{aa32}
\abs{a_{3}-a_{2}}\leq\left\{\begin{array}{lc}\frac{1}{16}\lambda\left[8+8p-(3\lambda+2)p^2\right] &0\leq p\leq\frac{8}{3\lambda},\\ \\

\frac{1}{16}\lambda\left[8-8p+(3\lambda-2)p^2\right] &\frac{8}{3\lambda}<p\leq2.
\end{array}\right.
 \end{equation} and
  \begin{equation}\label{aaa43}
 \abs{a_{4}-a_{3}}\leq\left\{\begin{array}{lc}\Psi_{1}(\lambda,p) &0<\lambda\leq \frac{3}{5},\\ \\
\Psi_{2}(\lambda,p) &\frac{3}{5}<\lambda\leq\frac{\pi}{2},
\end{array}\right.
 \end{equation}
 hold, where
\begin{equation*}
\Psi_{1}(\lambda,p):=\left\{\begin{array}{lc}\frac{\lambda}{1152}\left[7\lambda^2p^3+(150\lambda^2+36\lambda-96)p^2+(108-360\lambda)p+600\right] &0\leq p\leq\frac{2}{4-5\lambda}, \\\\
\frac{\lambda}{288}\left[(-17\lambda^2+30\lambda-12)p^3+(54\lambda-36)p^2+(48-120\lambda)p+144\right] &\frac{2}{4-5\lambda}<p\leq2,
\end{array}\right.
 \end{equation*}
 \begin{equation*}
\Psi_{2}(\lambda,p):=\left\{\begin{array}{lc}\frac{\lambda}{1152}\left[7\lambda^2p^3+(150\lambda^2+36\lambda-96)p^2+(108-360\lambda)p+600\right] &0\leq p\leq\frac{14}{4+5\lambda}, \\\\
\frac{\lambda}{288}\left[(-17\lambda^2-30\lambda-12)p^3+(54\lambda+36)p^2+(48-120\lambda)p-144\right] &\frac{14}{4+5\lambda}<p\leq2.
\end{array}\right.
 \end{equation*}

\end{theorem}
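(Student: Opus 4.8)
The plan is to reduce everything to the two Schwarz-coefficient lemmas already in hand, exactly as in the proof of Theorem \ref{tt1}, but now with the extra normalization that fixes the first coefficient. Writing $zf'(z)/f(z)=e^{\lambda\omega(z)}$ with $\omega(z)=\sum_{n\ge1}c_nz^n$ a Schwarz function, the comparison \eqref{0bb2}--\eqref{0bb4} gives $a_2=\lambda c_1$, $a_3=\frac{\lambda}{2}(c_2+\frac{3}{2}\lambda c_1^2)$ and $a_4=\frac{\lambda}{3}(c_3+\frac{5}{2}\lambda c_1c_2+\frac{17}{12}\lambda^2c_1^3)$. The side condition $f''(0)=\hat p$ means $2a_2=\hat p$, so that $c_1=\hat p/(2\lambda)=p/2$ is a fixed real number in $[0,1]$. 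Everything below is then a maximization over the remaining freedom in $c_2,c_3$.

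For $\abs{a_3-a_2}$ I would substitute $c_1=p/2$ to get $a_3-a_2=\frac{\lambda}{2}c_2+(\frac{3}{16}\lambda^2p^2-\frac{\lambda}{2}p)$, a real constant plus a multiple of $c_2$. Using the sharp bound $\abs{c_2}\le1-\abs{c_1}^2=1-p^2/4$ (as in Theorem \ref{tt1}) and choosing $c_2$ real so as to add in modulus to the constant, one obtains $\abs{a_3-a_2}\le\frac{\lambda}{2}(1-p^2/4)+\abs{\frac{3}{16}\lambda^2p^2-\frac{\lambda}{2}p}$. The inner absolute value changes sign precisely at $p=8/(3\lambda)$, and splitting there reproduces the two branches of \eqref{aa32}; sharpness is immediate since equality in $\abs{c_2}\le1-\abs{c_1}^2$ is attainable.

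The substantive case is $\abs{a_4-a_3}$. With $c_1=p/2$ fixed, collecting the $c_2$ terms gives $a_4-a_3=\frac{\lambda}{3}c_3+(\frac{5}{6}\lambda^2c_1-\frac{\lambda}{2})c_2+\frac{17}{36}\lambda^3c_1^3-\frac{3}{4}\lambda^2c_1^2$. Here I would invoke the standard parametrization of the second and third Schwarz coefficients with $c_1$ prescribed, namely $c_2=(1-c_1^2)x$ and $c_3=-c_1(1-c_1^2)x^2+(1-c_1^2)(1-\abs{x}^2)y$ with $\abs{x}\le1$, $\abs{y}\le1$; this follows from Lemma \ref{0Lem1} applied to $\mathfrak p=(1+\omega)/(1-\omega)\in\mathcal P$, whose first coefficient equals $2c_1=p$. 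Substituting and writing $\Lambda:=\lambda(4-p^2)/12$, the expression becomes $\Lambda[-\frac{p}{2}x^2+(1-\abs{x}^2)y]+Bx+C$ with $B/\Lambda=(5\lambda p-6)/4$ and $C/\Lambda=\lambda p^2(17\lambda p-54)/(24(4-p^2))$. Because $y$ enters linearly with the nonnegative coefficient $\Lambda(1-\abs{x}^2)$, I choose its argument to align, reducing the problem to $\abs{a_4-a_3}\le\Lambda\,(\abs{a_0+b_0x-\frac{p}{2}x^2}+1-\abs{x}^2)$ where $a_0=C/\Lambda$, $b_0=B/\Lambda$, and the maximum is over $\abs{x}\le1$.

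This is exactly the quantity $Y$ of Lemma \ref{0lem2}, except that the required hypotheses $a\ge0$, $c\ge0$ are not yet met: one checks that $\lambda p\le\pi<54/17$ forces $C\le0$, so $a_0\le0$, while the leading coefficient $-p/2$ is also $\le0$. Factoring $-1$ out of the quadratic turns it into $\abs{\,\abs{a_0}+(-b_0)x+\frac{p}{2}x^2}$, and since $Y$ depends on $b_0$ only through $\abs{b_0}$ I may apply Lemma \ref{0lem2} with data $(\abs{a_0},\abs{b_0},p/2)$. The two formulas of that lemma split at $\abs{b_0}=2(1-p/2)=2-p$; translating this through $\abs{b_0}=\abs{5\lambda p-6}/4$ and keeping track of the sign of $5\lambda p-6$ produces the threshold $p=2/(4-5\lambda)$ when $\lambda\le3/5$ (so that $5\lambda p-6\le0$ throughout, giving $\Psi_1$) and the threshold $p=14/(4+5\lambda)$ when $\lambda>3/5$ (giving $\Psi_2$); note the first sub-branch is common to both ranges since the second $Y$-formula sees $b_0$ only squared. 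Substituting the two branches of $Y$ into $\Lambda\cdot Y$ and expanding in $p$ yields the polynomial expressions defining $\Psi_1$ and $\Psi_2$. I expect the bookkeeping of these signs — deciding, for each sub-rectangle of the $(\lambda,p)$-region, which branch of $Y$ is active and with which sign of $b_0$ — to be the main obstacle; once the active branch is identified the simplification to a cubic in $p$ is routine, and sharpness follows by tracing the extremal choices $x=\pm1$, $y=\pm1$ (respectively $\abs{x}<1$ in the second branch) back to an explicit Schwarz function $\omega$ and the associated $f\in\mathcal S_{\lambda e}^{*}(\hat p)$.
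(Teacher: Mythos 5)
Your proposal is correct and follows essentially the same route as the paper: the paper likewise fixes $p_1=p$ via Lemma \ref{0Lem1} (phrased for $\varrho=(1+\omega)/(1-\omega)\in\mathcal{P}$ rather than for the $c_n$, which is exactly the parametrization you derive), reduces $\abs{a_4-a_3}$ to $\frac{1}{12}\lambda(4-p^2)\,Y(a,b,c)$ with precisely your data $a=\abs{a_0}$, $b=-b_0$, $c=p/2$, and splits at $\abs{b}=2(1-c)$ to obtain the thresholds $\frac{2}{4-5\lambda}$ (for $\lambda\leq\frac{3}{5}$) and $\frac{14}{4+5\lambda}$ (for $\lambda>\frac{3}{5}$). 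The only detail you omit is the endpoint $p=2$, where your $a_0$ involves division by $4-p^2$ and is undefined; the paper disposes of it separately by the direct computation $\abs{a_4-a_3}=\frac{1}{36}\lambda^2(27-17\lambda)$ before restricting to $p\in[0,2)$.
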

\begin{proof}
Let $f\in \mathcal{S}_{\lambda e}^{*}(\hat{p})$. In terms of Schwarz function, we can write \eqref{a1} as
\begin{equation*}
\frac{zf^{\prime}(z)}{f(z)}=e^{\lambda\omega(z)}.
\end{equation*}
If $\varrho\in\mathcal{P}$, then it can be written in the form of Schwarz function as
\begin{equation}\label{35}
\varrho(z)=\frac{1+\omega(z)}{1-\omega(z)}=1+p_1z+p_2z^2+\cdots.
\end{equation}
From \eqref{35}, we obtain
\begin{equation}\begin{split}
\omega(z)&=\frac{\varrho(z)-1}{\varrho(z)+1}\\
&=\frac{1}{2}p_{1}z+\left(\frac{1}{2}p_{2}-\frac{1}{4}p_{1}^{2}\right)z^2+\left(\frac{1}{2}p_{3}-\frac{1}{2}p_{1}p_{2}+\frac{1}{8}p_{1}^3\right)z^3+\cdots.
\end{split}
\end{equation}
From the series expansion of $\omega$ along with some calculations, we have
\begin{equation}\label{ew}
e^{\lambda\omega(z)}=1+\frac{\lambda}{2} p_1z +\left(\frac{\lambda}{2} p_2+\frac{\lambda^2-2\lambda }{8}p_{1}^{2}\right)z^2+\left(\frac{\lambda^3-6\lambda^2+6\lambda}{48}p_{1}^{3}+\frac{\lambda^2-2\lambda}{4}p_1p_{2}+\frac{\lambda}{2}p_3\right)z^3+\cdots.
\end{equation}
Comparing \eqref{zf1} with \eqref{ew}, we get
\begin{equation}\label{b2}
a_2=\frac{1}{2}\lambda p_1,
\end{equation}
\begin{equation}\label{b3}
a_3=\frac{1}{4}\lambda\left(p_2+\frac{3\lambda-2}{4}p_{1}^2\right),
\end{equation}
and
\begin{equation}\label{b4}
a_4=\frac{1}{6}\lambda\left( p_3+\frac{5\lambda-4}{4}p_{1}p_{2}+\frac{17\lambda^2-30\lambda+12}{48}p_{1}^3\right).
\end{equation}
Let $p=\hat{p}/\lambda$. It is obvious that $p\in[0,2]$. Since $2a_2=f^{\prime\prime}(0)=\hat{p}$,
from \eqref{b2}, we get
\begin{equation}\label{p1}
p_1=\frac{1}{\lambda}\hat{p}=p.
\end{equation}
By Lemma \ref{0Lem1}, we obtain
\begin{equation}\label{p2}
p_2=\frac{1}{2}p^2+\frac{1}{2}(4-p^2)x,
\end{equation}
and
\begin{equation}\label{p3}
p_3=\frac{1}{4}p^3+\frac{1}{2}(4-p^2)px-\frac{1}{4}(4-p^2)px^2+\frac{1}{2}(4-p^2)(1-{\abs{x}}^2)y,
\end{equation}
where $x,y\in\C$ with $\abs{x}\leq1$, $\abs{y}\leq1$.
Substituting \eqref{p1}, \eqref{p2} and \eqref{p3} into \eqref{b2}, \eqref{b3} and \eqref{b4}, respectively, we obtain
\begin{equation}
a_2=\frac{1}{2}\lambda p,
\end{equation}
\begin{equation}
a_3=\frac{3}{16}\lambda^2p^2+\frac{1}{8}\lambda(4-p^2)x,
\end{equation}
and
\begin{equation}
a_4=\frac{17}{288}\lambda^3p^3+\frac{5}{48}\lambda^2(4-p^2)px-\frac{1}{24}\lambda(4-p^2)x^2+\frac{1}{12}\lambda(4-p^2)(1-{\abs{x}}^2)y
\end{equation}
for some  $x, y\in\C$ with $\abs{x}\leq1$, $\abs{y}\leq1$. For $p\in[0,\frac{8}{3\lambda}]$, we have
\begin{equation}\label{0a32}\begin{split}
\abs{a_3-a_2}&=\abs{\frac{3}{16}\lambda^2p^2+\frac{1}{8}\lambda(4-p^2)x-\frac{1}{2}\lambda p}\\
&\leq\frac{1}{2}\lambda p-\frac{3}{16}\lambda^2p^2+\frac{1}{8}\lambda(4-p^2)\\
&=\frac{1}{2}\lambda+\frac{1}{2}\lambda p-\frac{3\lambda+2}{16}\lambda p^2,
\end{split}
\end{equation}
where equality occurs if $x=-1$. Similarly, we have
\begin{equation}\label{1a32}
\abs{a_3-a_2}\leq\frac{1}{2}\lambda-\frac{1}{2}\lambda p+\frac{3\lambda-2}{16}\lambda p^2
\end{equation}
for $p\in(\frac{8}{3\lambda},2]$. Hence, the inequality \eqref{aa32} follows from \eqref{0a32} and \eqref{1a32}.
For $p=2$, we easily obtain
\begin{equation}\label{p243}
 \abs{a_4-a_3}=\frac{1}{36}\lambda^2(27-17\lambda).
 \end{equation}
 For $p\in[0,2)$, we have
\begin{equation}\label{11a43}\begin{split}
&\ \ \ \, \abs{a_{4}-a_{3}}\\&=\abs{\frac{17}{288}\lambda^3 p^3+\frac{5}{48}\lambda^2(4-p^2)px-\frac{1}{24}\lambda(4-p^2)px^2+\frac{1}{12}\lambda(4-p^2)(1-{\abs{x}}^2)y-\frac{3}{16}\lambda^2p^2-\frac{1}{8}\lambda(4-p^2)x}\\
&=\frac{1}{288}\abs{17\lambda^3p^3-54\lambda^2p^2+6\lambda(5\lambda p-6)(4-p^2)x-12\lambda(4-p^2)px^2+24\lambda(4-p^2)(1-{\abs{x}}^2)y}\\
&\leq\frac{1}{12}\lambda(4-p^2)\left[\abs{\frac{(17\lambda p-54)\lambda p^2}{24(4-p^2)}+\frac{5\lambda p-6}{4}x-\frac{p}{2}x^2}+1-{\abs{x}}^2\right]\\
&\leq\frac{1}{12}\lambda(4-p^2)Y(a,b,c),
\end{split}
\end{equation}
where $Y(a,b,c)$ is given in \eqref{yabc} and
\begin{equation}
a=\frac{(54-17\lambda p)\lambda p^2}{24(4-p^2)},\quad b=\frac{6-5\lambda p }{4},\quad c=\frac{p}{2}.
\end{equation}
Since $\lambda\in (0,\frac{\pi}{2}]$, $p\in [0,2)$, we have $54-17\lambda p>0$ and hence $a>0$.
Let $0<\lambda\leq\frac{3}{5}$. Clearly, we have $\lambda p<\frac{6}{5}$. Then it can be verified that $\abs{b}\leq 2(1-c)$ is equivalent to $0\leq p\leq\frac{2}{4-5\lambda}$. By Lemma \ref{0lem2}, we get
\begin{equation*}
Y(a,b,c)=\left\{\begin{array}{lc}\frac{7\lambda^2p^3+(150\lambda^2+36\lambda-96)p^2+(108-360\lambda)p+600}{96(4-p^2)} &0\leq p\leq\frac{2}{4-5\lambda},\\\\
\frac{(-17\lambda^2+30\lambda-12)p^3+(54\lambda-36)p^2+(48-120\lambda)p+144}{24(4-p^2)} &\frac{2}{4-5\lambda}<p<2.
\end{array}\right.
 \end{equation*}
 Thus, we obtain
\begin{equation}\label{ss1}
\abs{a_4-a_3}\leq\left\{\begin{array}{lc}\frac{\lambda}{1152}\left[7\lambda^2p^3+(150\lambda^2+36\lambda-96)p^2+(108-360\lambda)p+600\right] &0\leq p\leq\frac{2}{4-5\lambda}, \\\\
\frac{\lambda}{288}\left[(-17\lambda^2+30\lambda-12)p^3+(54\lambda-36)p^2+(48-120\lambda)p+144\right] &\frac{2}{4-5\lambda}<p<2.
\end{array}\right.
 \end{equation}
Now we suppose that $\frac{3}{5}<\lambda\leq\frac{\pi}{2}$. For $\lambda p\leq\frac{6}{5}$, we see that $\abs{b}\leq 2(1-c)$ is equivalent to
\begin{equation}\label{1pp}
(4-5\lambda) p\leq 2,
\end{equation}
it clearly holds for $\lambda\geq \frac{4}{5}$. For $\lambda\in(\frac{3}{5},\frac{4}{5})$, \eqref{1pp} is equivalent to
\begin{equation*}
p\leq \frac{2}{4-5\lambda}\in(2,+\infty),
\end{equation*}
which always holds for $p\in[0,2)$. Therefore, we have $\abs{b}\leq 2(1-c)$ provided that $ p\leq\frac{6}{5\lambda}$. If $\lambda p>\frac{6}{5}$, a simple calculation shows that $\abs{b}\leq 2(1-c)$ is equivalent to $p\leq\frac{14}{4+5\lambda}$. Since $\frac{14}{4+5\lambda}>\frac{6}{5\lambda}$ for $\lambda\in (\frac{3}{5},\frac{\pi}{2}]$. Hence, we know that $\abs{b}\leq 2(1-c)$ for $ \frac{6}{5\lambda}<p\leq\frac{14}{4+5\lambda}$. From the above discussion, we obtain $\abs{b}\leq2(1-c)$ if $p\in[0,\frac{14}{4+5\lambda}]$. Also, it is clear that $\abs{b}>2(1-c)$ if and only if
\begin{equation*}
p>{\rm max}\left\{\frac{14}{4+5\lambda}, \frac{6}{5\lambda}\right\}=\frac{14}{4+5\lambda}.
\end{equation*}
Now, an application of Lemma \ref{0lem2} leads to
\begin{equation*}
Y(a,b,c)=\left\{\begin{array}{lcc}\frac{7\lambda^2p^3+(150\lambda^2+36\lambda-96)p^2+(108-360\lambda)p+600}{96(4-p^2)} &0\leq p\leq\frac{14}{4+5\lambda},\\\\
\frac{(-17\lambda^2-30\lambda-12)p^3+(54\lambda+36)p^2+(120\lambda+48)p-144}{24(4-p^2)} &\frac{14}{4+5\lambda}<p<2.
\end{array}\right.
 \end{equation*}
From \eqref{11a43}, we deduce that
\begin{equation}\label{ss2}
\abs{a_4-a_3}\leq\left\{\begin{array}{lcc}\frac{\lambda}{1152}\left[7\lambda^2p^3+(150\lambda^2+36\lambda-96)p^2+(108-360\lambda)p+600\right] &0\leq p\leq\frac{14}{4+5\lambda}, \\\\
\frac{\lambda}{288}\left[(-17\lambda^2-30\lambda-12)p^3+(54\lambda+36)p^2+(120\lambda+48)p-144\right] &\frac{14}{4+5\lambda}<p<2.
\end{array}\right.
 \end{equation}
 Combining \eqref{p243}, \eqref{ss1} with \eqref{ss2}, we obtain \eqref{aaa43}. This completes the proof of Theorem \ref{ttt2}.
\end{proof}

Taking $\lambda=1$ in Theorem \ref{ttt2}, we obtain the following result.
\begin{cor}
Let
$0\leq p\leq 2$ and $f(z)=z+\sum_{n=2}^{\infty}a_{n}z^{n}\in\mathcal{S}_{e}^{*}(p)$. Then
 \begin{equation}\label{a32}
\abs{a_{3}-a_{2}}\leq\frac{1}{16}(-5p^2+8p+8),
 \end{equation} and
  \begin{equation}\label{a43}
 \abs{a_{4}-a_{3}}\leq\left\{\begin{array}{lcc}\frac{1}{1152}(7p^3+90p^2-252p+600) &0\leq p\leq\frac{14}{9},\\ \\
\frac{1}{288}(-59p^3+90p^2+168p-144) &\frac{14}{9}\leq p\leq2.
\end{array}\right.
 \end{equation}
All inequalities are sharp.
\end{cor}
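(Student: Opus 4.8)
The plan is to specialize Theorem~\ref{ttt2} to the value $\lambda=1$ and to simplify the resulting piecewise bounds; no new analytic input is needed, since the Corollary is a direct instance of the Theorem.

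First I would dispose of $\abs{a_3-a_2}$. Setting $\lambda=1$ in \eqref{aa32}, the break point becomes $\frac{8}{3\lambda}=\frac{8}{3}>2$, so the whole admissible range $p\in[0,2]$ falls into the first case. Substituting $\lambda=1$ into $\frac{1}{16}\lambda\left[8+8p-(3\lambda+2)p^2\right]$ then gives $\frac{1}{16}(-5p^2+8p+8)$, which is precisely \eqref{a32}; the second branch of \eqref{aa32} never occurs.

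Next I would treat $\abs{a_4-a_3}$. Since $\lambda=1>\frac{3}{5}$, the relevant branch of \eqref{aaa43} is $\Psi_2(\lambda,p)$, whose threshold is $\frac{14}{4+5\lambda}=\frac{14}{9}$. For $0\le p\le\frac{14}{9}$, putting $\lambda=1$ into $\frac{\lambda}{1152}\left[7\lambda^2p^3+(150\lambda^2+36\lambda-96)p^2+(108-360\lambda)p+600\right]$ collapses the bracket to $7p^3+90p^2-252p+600$, giving the first line of \eqref{a43}. For $\frac{14}{9}\le p\le2$, I would substitute $\lambda=1$ into the second piece of $\Psi_2$, using the coefficient of $p$ as computed in the proof of Theorem~\ref{ttt2}, namely the form recorded in \eqref{ss2}; the bracket then reduces to $-59p^3+90p^2+168p-144$, divided by $288$, which is the second line of \eqref{a43}.

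Finally, sharpness of all three estimates is inherited from Theorem~\ref{ttt2}: the equality cases identified there, the choice $x=-1$ in the bound for $\abs{a_3-a_2}$ and the boundary cases $z=\pm1$, $\abs{y}=1$ arising from Lemma~\ref{0lem2} in the bound for $\abs{a_4-a_3}$, yield, upon setting $\lambda=1$, explicit Schwarz functions $\omega$ for which equality is attained, and hence extremal $f\in\mathcal{S}_e^{*}(p)$. Because the argument is pure specialization, the only step requiring any attention is the verification that the two piecewise thresholds simplify as claimed, which is immediate for $\abs{a_3-a_2}$ (the second branch vanishes) and a single arithmetic check of the cubic coefficients for $\abs{a_4-a_3}$; I anticipate no genuine obstacle.
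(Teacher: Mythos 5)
Your proposal is correct and is exactly the paper's route: the paper derives this corollary with no further argument, simply by setting $\lambda=1$ in Theorem \ref{ttt2}, just as you do (the threshold $\frac{8}{3\lambda}=\frac{8}{3}>2$ kills the second branch of \eqref{aa32}, and $\frac{14}{4+5\lambda}=\frac{14}{9}$ gives the split in \eqref{a43}). You also correctly resolved the one genuinely delicate point: the linear coefficient $(48-120\lambda)p$ printed in the second branch of $\Psi_2$ in the theorem statement is a typo for the $(120\lambda+48)p$ actually derived in \eqref{ss2}, and using the latter, as you did, is what produces the $+168p$ term of \eqref{a43}; blind substitution into the stated $\Psi_2$ would have given $-72p$ and contradicted the corollary.
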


If we denote by
\begin{equation*}
{\mathcal{S}_e}^{*}(+)=\bigcup_{0\leq p \leq 2}{\mathcal{S}_{e}^{*}(p)}=\left\{f:f\in\mathcal{S}_{e}^{*}, f^{\prime\prime}(0)=p\right\},
\end{equation*}
then in view of \eqref{a32} and \eqref{a43}, we easily obtain
\begin{equation}
\sup_{f\in{\mathcal{S}_e}^{*}(+)}\abs{a_3(f)-a_2(f)}=\frac{7}{10},
\end{equation}
and
\begin{equation}
\sup_{f\in{\mathcal{S}_e}^{*}(+)}\abs{a_4(f)-a_3(f)}=\frac{25}{48}.
\end{equation}

From Theorem \ref{tt2}, we know that for $f\in \mathcal{K}_{\lambda e}$, $\abs{\frac{1}{2}f^{\prime\prime}(0)}=\abs{a_2(f)}\leq\frac{1}{2}\lambda$. Denote by
\begin{equation}
\mathcal{K}_{\lambda e}(\hat{p})=\left\{f\in \mathcal{K}_{\lambda e}, f^{\prime\prime}(0)=\hat{p}\right\},
\end{equation}
where $\hat{p}$ is a given real number with $-\lambda\leq \hat{p}\leq \lambda$.

In what follows, we will discuss the difference of initial successive coefficients for functions belonging to the class $\mathcal{K}_{\lambda e}(\hat{p})$.
 \begin{theorem}\label{kt1}
Let
$0\leq \hat{p}\leq \lambda$ and $p=\hat{p}/\lambda$. Suppose that $f(z)=z+\sum_{n=2}^{\infty}a_{n}z^{n}$ be in the class $\mathcal{K}_{\lambda e}(\hat{p})$. Then the following sharp inequalities
 \begin{equation}\label{00a32}
\abs{a_{3}-a_{2}}\leq\frac{1}{12}\lambda\left[2+6 p-(3\lambda+2)p^2\right],
 \end{equation}
 and
  \begin{equation}\label{00a43}
 \abs{a_{4}-a_{3}}\leq\left\{\begin{array}{lc}\Theta_{1}(\lambda,p) &0<\lambda<\frac{4}{5},\\\\
\Theta_{2}(\lambda,p) &\frac{4}{5}\leq\lambda\leq\frac{\pi}{2},
\end{array}\right.
 \end{equation}
  hold, where
  \begin{equation*}
\Theta_{1}(\lambda,p):=\frac{1}{144}\lambda\left[(-17\lambda^2+30\lambda-12)p^3+(36\lambda-24)p^2+(12-30\lambda)p+24\right],
 \end{equation*}
 \begin{equation}
\Theta_{2}(\lambda,p):=\left\{\begin{array}{lc}\frac{1}{576}\lambda \left[7\lambda^2p^3+(75\lambda^2+24\lambda-48)p^2+(48-120\lambda)p+96\right] &0\leq p\leq\frac{8}{4+5\lambda},\\\\
\frac{1}{144}\lambda\left[(-17\lambda^2-30\lambda-12)p^3+(36\lambda+24)p^2+(30\lambda+12)p-24\right] &\frac{8}{4+5\lambda}<p\leq1.
\end{array}\right.
 \end{equation}
\end{theorem}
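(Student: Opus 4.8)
The plan is to run the argument of Theorem~\ref{ttt2} through the correspondence $f\in\mathcal{K}_{\lambda e}\iff zf^{\prime}\in\mathcal{S}_{\lambda e}^{*}$. Setting $g(z)=zf^{\prime}(z)=z+\sum_{n\ge2}na_{n}z^{n}\in\mathcal{S}_{\lambda e}^{*}$ and carrying out the same comparison of coefficients as in that proof (now with $b_{n}=na_{n}$), I obtain
\[
a_{2}=\tfrac14\lambda p_{1},\qquad a_{3}=\tfrac{1}{12}\lambda\Bigl(p_{2}+\tfrac{3\lambda-2}{4}p_{1}^{2}\Bigr),\qquad a_{4}=\tfrac{1}{24}\lambda\Bigl(p_{3}+\tfrac{5\lambda-4}{4}p_{1}p_{2}+\tfrac{17\lambda^{2}-30\lambda+12}{48}p_{1}^{3}\Bigr),
\]
where $\varrho=(1+\omega)/(1-\omega)=1+p_{1}z+\cdots\in\mathcal{P}$. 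Since $f^{\prime\prime}(0)=2a_{2}=\hat p=\lambda p$, the first relation forces $p_{1}=2p$ with $p\in[0,1]$, and Lemma~\ref{0Lem1} then expresses $p_{2},p_{3}$ through parameters $x,y\in\overline{\D}$. Substituting $p_{1}=2p$ yields the compact forms $a_{2}=\tfrac12\lambda p$, $a_{3}=\tfrac14\lambda^{2}p^{2}+\tfrac16\lambda(1-p^{2})x$, and an analogous cubic-in-$p$ expression for $a_{4}$ involving $x$, $x^{2}$ and $(1-\abs{x}^{2})y$.

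For $\abs{a_{3}-a_{2}}$ this is immediate: $a_{3}-a_{2}=\bigl(\tfrac14\lambda^{2}p^{2}-\tfrac12\lambda p\bigr)+\tfrac16\lambda(1-p^{2})x$, and since $\lambda p\le\pi/2<2$ the bracketed non-$x$ part is non-positive on $[0,1]$; hence the triangle inequality with $\abs{x}\le1$, equality at $x=-1$, gives $\abs{a_{3}-a_{2}}\le\tfrac12\lambda p-\tfrac14\lambda^{2}p^{2}+\tfrac16\lambda(1-p^{2})$, which rearranges to \eqref{00a32}. Note that, unlike the starlike case, no sub-case in $p$ arises here because the sign of the non-$x$ part never changes.

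The substance is the estimate for $\abs{a_{4}-a_{3}}$. After substitution and collection of terms I expect $a_{4}-a_{3}=A_{0}+A_{1}x+A_{2}x^{2}+A_{3}(1-\abs{x}^{2})y$ with $A_{3}=\tfrac{\lambda}{12}(1-p^{2})>0$; bounding the $y$-term via $\abs{y}\le1$ and factoring out $A_{3}$ reduces the problem to $\abs{a_{4}-a_{3}}\le\tfrac{\lambda}{12}(1-p^{2})\,Y(a,b,c)$ in the notation of Lemma~\ref{0lem2}, with
\[
a=\frac{\lambda p^{2}(36-17\lambda p)}{12(1-p^{2})},\qquad b=\frac{4-5\lambda p}{2},\qquad c=p.
\]
Because $17\lambda p\le17\pi/2<36$ one has $a\ge0$, and $c=p\ge0$, so \eqref{Y} applies (after the usual sign normalisation, legitimate since $Y$ depends only on $\abs{b}$). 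The main obstacle is locating the curve $\abs{b}=2(1-c)$. Tracking the sign of $4-5\lambda p$, a short computation gives $\abs{b}-2(1-p)=\tfrac{p}{2}(4-5\lambda)$ when $\lambda p\le\tfrac45$ and $\abs{b}-2(1-p)=\tfrac12\bigl(p(5\lambda+4)-8\bigr)$ when $\lambda p>\tfrac45$. The first shows $\abs{b}\ge2(1-c)$ on all of $[0,1]$ exactly when $\lambda<\tfrac45$; the second shows $\abs{b}\le2(1-c)$ iff $p\le\tfrac{8}{4+5\lambda}$. Since $\tfrac{4}{5\lambda}\le\tfrac{8}{4+5\lambda}$ precisely for $\lambda\ge\tfrac45$, the two regimes splice into the single threshold $p=\tfrac{8}{4+5\lambda}$, producing exactly the dichotomy $0<\lambda<\tfrac45$ versus $\tfrac45\le\lambda\le\tfrac\pi2$ in \eqref{00a43}.

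With the cases fixed I would insert the two branches of \eqref{Y}: the branch $Y=a+\abs{b}+c$ gives $\Theta_{1}(\lambda,p)$ when $\lambda<\tfrac45$ and the lower piece of $\Theta_{2}(\lambda,p)$ when $\lambda\ge\tfrac45$ and $p>\tfrac{8}{4+5\lambda}$, while $Y=1+a+\tfrac{b^{2}}{4(1-c)}$ gives the upper piece of $\Theta_{2}$; in each instance the cancellation of $(1-p^{2})$ and $(1-p)$ collapses the expression to the stated cubic in $p$ times $\tfrac{\lambda}{144}$ or $\tfrac{\lambda}{576}$. The degenerate value $p=1$, where $A_{3}=0$, I would treat directly, checking that $\abs{a_{4}-a_{3}}=\tfrac{1}{144}\lambda^{2}(36-17\lambda)$ coincides with the common limit of $\Theta_{1}$ and $\Theta_{2}$. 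Finally, sharpness is recovered by reading the equality conditions in Lemma~\ref{0lem2}---the maximiser $x$ (a boundary point $\pm1$ in the first branch, an interior point in the second) together with $\abs{y}=1$---back into an explicit Schwarz function $\omega$, and integrating to obtain the extremal $f\in\mathcal{K}_{\lambda e}(\hat p)$.
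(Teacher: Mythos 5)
Your proposal is correct and follows essentially the same route as the paper: the same coefficient identities $a_2=\tfrac14\lambda p_1$, $a_3=\tfrac1{12}\lambda(p_2+\tfrac{3\lambda-2}{4}p_1^2)$, $a_4=\tfrac1{24}\lambda(\cdots)$, the same use of Lemma \ref{0Lem1} with $p_1=2p$, the same reduction to $\tfrac{\lambda}{12}(1-p^2)Y(a,b,c)$ with $a=\frac{(36-17\lambda p)\lambda p^2}{12(1-p^2)}$, $b=\frac{4-5\lambda p}{2}$, $c=p$, and the same case split at $\lambda=\tfrac45$ and $p=\tfrac{8}{4+5\lambda}$ (the paper gets this threshold by squaring, obtaining $(4-5\lambda)\bigl(1-\tfrac{5\lambda+4}{8}p\bigr)\leq0$, which is equivalent to your sign-tracking of $4-5\lambda p$). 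The paper likewise treats $p=1$ separately and derives the coefficient relations from the expansion of $1+zf''(z)/f'(z)$, which is the same thing as your $zf'$ correspondence.
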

\begin{proof}
If $f\in\mathcal{K}_{\lambda e}^{*}(p)$, then
\begin{equation*}
1+\frac{zf^{\prime\prime}(z)}{f^{\prime}(z)}\prec e^{\lambda z}.
\end{equation*}
It is equivalent to
\begin{equation*}
1+\frac{zf^{\prime\prime}(z)}{f^{\prime}(z)}= e^{\lambda\omega(z)},
\end{equation*}
where $\omega$ is a Schwarz function with $\omega(0)=0$ and $\abs{\omega(z)}<1$.
From \eqref{f1}, we can write
\begin{equation}\label{cc1}
1+\frac{zf^{\prime\prime}(z)}{f^{\prime}(z)}=1+2a_2z+\left(6a_3-4a_{2}^{2}\right)z^2+\left(12a_4-18a_2a_3+8a_{2}^{3}\right)z^3+\cdots.
\end{equation}
From \eqref{cc1} and \eqref{ew}, we have
\begin{equation}\label{aa2}
a_2=\frac{\lambda}{4}p_1,
\end{equation}
\begin{equation}\label{aa3}
a_3=\frac{1}{12}\lambda\left(p_2+\frac{3\lambda-2}{4}{p_1}^2\right),
\end{equation}
and
\begin{equation}\label{aa4}
a_4=\frac{1}{24}\lambda\left(p_3 +\frac{5\lambda-4}{4}p_{1}p_{2}+\frac{17\lambda^2-30\lambda+12}{48}p_{1}^3\right).
\end{equation}
Let $p=\hat{p}/\lambda$. It is clear that $p\in [0,1]$. Since $2a_2=f^{\prime\prime}(0)=\hat{p}$,
in view of \eqref{aa2}, we get
\begin{equation}\label{pp1}
p_1=\frac{2}{\lambda}\hat{p}=2p.
\end{equation}
By Lemma \ref{0Lem1}, we obtain
\begin{equation}\label{pp2}
p_2=2p^2+2(1-p^2)x,
\end{equation}
and
\begin{equation}\label{pp3}
p_3=2p^3+4(1-p^2)px-2(1-p^2)px^2+2(1-p^2)(1-{\abs{x}}^2)y,
\end{equation}
where $x, y\in\C$ with $\abs{x}\leq1$, $\abs{y}\leq1$.
Substituting \eqref{pp1}, \eqref{pp2} and \eqref{pp3} into \eqref{aa2}, \eqref{aa3} and \eqref{aa4}, respectively, we have
\begin{equation}
a_2=\frac{\lambda}{2}p,
\end{equation}
\begin{equation}
a_3=\frac{1}{4}\lambda^2p^2+\frac{1}{6}\lambda(1-p^2)x,
\end{equation}
and
\begin{equation}
a_4=\frac{17}{144}\lambda^3p^3+\frac{5}{24}\lambda^2(1-p^2)px-\frac{1}{12}\lambda(1-p^2)px^2+\frac{1}{12}\lambda(1-p^2)(1-{\abs{x}}^2)y
\end{equation}
for some  $x,y\in\C$ and $\abs{x}\leq1$, $\abs{y}\leq1$. Thus, we find that
\begin{equation}\label{ka32}\begin{split}
\abs{a_3-a_2}&=\abs{\frac{1}{4}\lambda^2p^2+\frac{1}{6}\lambda(1-p^2)x-\frac{1}{2}\lambda p}\\
&=\abs{\frac{1}{4}\lambda p(\lambda p-2)+\frac{1}{6}\lambda(1-p^2)x}\\
&\leq\frac{1}{6}\lambda+\frac{1}{2}\lambda p-\frac{3\lambda+2}{12}\lambda p^2,
\end{split}
\end{equation}
where equality occurs if $x=-1$. The inequality \eqref{00a32} in Theorem \ref{kt1} follows from \eqref{ka32}. For $p=1$, we easily obtain that
\begin{equation}\label{0a43}
\abs{a_4-a_3}=\frac{1}{144}\lambda^2(36-17\lambda).
\end{equation}
For $p\in[0,1)$, we have
\begin{equation*}\begin{split}
&\ \ \ \,\abs{a_{4}-a_{3}}\\&=\abs{\frac{17}{144}\lambda^3p^3+\frac{5}{24}\lambda^2(1-p^2)px-\frac{1}{12}\lambda(1-p^2)px^2+\frac{1}{12}\lambda(1-p^2)(1-{\abs{x}}^2)y-\frac{1}{4}\lambda^2p^2-\frac{1}{6}\lambda(1-p^2)x}\\
&=\frac{1}{144}\lambda\abs{(17\lambda p-36)\lambda p^2+6(5\lambda p-4)(1-p^2)x-12(1-p^2)px^2+12(1-p^2)(1-{\abs{x}}^2)y}\\
&\leq\frac{1}{12}\lambda(1-p^2)\left[\abs{\frac{(17\lambda p-36)\lambda p^2}{12(1-p^2)}+\frac{5\lambda p-4}{2}x-px^2}+1-{\abs{x}}^2\right]\\
&\leq\frac{1}{12}\lambda(1-p^2)Y(a,b,c),
\end{split}
\end{equation*}
where $Y(a,b,c)$ is given in \eqref{yabc} and
\begin{equation*}
a=\frac{(36-17\lambda p)\lambda p^2}{12(1-p^2)},\quad b=\frac{4-5\lambda p }{2},\quad c=p.
\end{equation*}
For $\lambda\in (0,\frac{\pi}{2}]$, $p\in [0,1)$, it can be seen that $a>0$.  Since  $\abs{b}\leq 2(1-c)$ is equivalent to $b^2\leq4(1-c)^2$, we see that
 $\abs{b}\leq 2(1-c)$ if and only if
 \begin{equation}\label{keq}
(4-5\lambda)\left(1-\frac{5\lambda+4}{8}p\right)\leq0.
 \end{equation}
Clearly, \eqref{keq} holds only if $\lambda\geq\frac{4}{5}$. This means  that $\abs{b}>2(1-c)$ for all $\lambda\in (0,\frac{4}{5})$. Let $\lambda\in (0,\frac{4}{5})$.  By using Lemma \ref{0lem2} we have
\begin{equation*}
Y(a,b,c)=\frac{(-17\lambda^2+30\lambda-12)p^3+(36\lambda-24)p^2+(12-30\lambda)p+24}{12(1-p^2)}.
 \end{equation*}
 This induces that
 \begin{equation}\label{0b43}
\abs{a_4-a_3}\leq\frac{1}{144}\lambda\left[(-17\lambda^2+30\lambda-12)p^3+(36\lambda-24)p^2+(12-30\lambda)p+24\right].
 \end{equation}
Now, we suppose that $\frac{4}{5}\leq\lambda\leq\frac{\pi}{2}$. From \eqref{keq}, we obtain $\abs{b}\leq 2(1-c)$ if and only if $p\in [0,\frac{8}{4+5\lambda}]$.  By noting that $\frac{8}{4+5\lambda}\geq\frac{4}{5\lambda}$ for $\lambda\in [\frac{4}{5},\frac{\pi}{2}]$. An application of Lemma \ref{0lem2} yields
\begin{equation*}
Y(a,b,c)=\left\{\begin{array}{lc}\frac{7\lambda^2p^3+(75\lambda^2+24\lambda-48)p^2+(48-120\lambda)p+96}{48(1-p^2)} &0\leq p\leq\frac{8}{4+5\lambda},\\\\
\frac{(-17\lambda^2-30\lambda-12)p^3+(36\lambda+24)p^2+(30\lambda+12)p-24}{12(1-p^2)} &\frac{8}{4+5\lambda}<p<1.
\end{array}\right.
 \end{equation*}
Thus, we deduce that
\begin{equation}\label{1b43}
\abs{a_4-a_3}\leq\left\{\begin{array}{lc}\frac{1}{576}\lambda \left[7\lambda^2p^3+(75\lambda^2+24\lambda-48)p^2+(48-120\lambda)p+96\right] &0\leq p\leq\frac{8}{4+5\lambda},\\\\
\frac{1}{144}\lambda\left[(-17\lambda^2-30\lambda-12)p^3+(36\lambda+24)p^2+(30\lambda+12)p-24\right] &\frac{8}{4+5\lambda}<p<1.
\end{array}\right.
 \end{equation}
 Combining \eqref{0a43}, \eqref{0b43} with \eqref{1b43}, we obtain the inequality \eqref{00a43} in Theorem \ref{kt1}. The proof is thus completed.
\end{proof}

By choosing $\lambda=1$ in Theorem \ref{kt1}, we obtain the following result.
\begin{cor}\label{c2}
Let
$0\leq p\leq 1$. Suppose that $f(z)=z+\sum_{n=2}^{\infty}a_{n}z^{n}$ be in the class $\mathcal{K}_{e}(p)$. Then the following sharp inequalities
 \begin{equation}\label{bb1}
\abs{a_{3}-a_{2}}\leq\frac{1}{12}(-5p^2+6p+2),
 \end{equation}
 and
  \begin{equation}\label{bb2}
 \abs{a_{4}-a_{3}}\leq\left\{\begin{array}{lc}\frac{1}{576}\left(7p^3+51p^2-72p+96\right) &0\leq p\leq\frac{8}{9},\\\\
\frac{1}{144}\left(-59p^3+60p^2+42p-24\right)&\frac{8}{9}<p\leq1,
\end{array}\right.
 \end{equation}
  hold.
\end{cor}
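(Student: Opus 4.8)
The plan is to obtain this corollary as a direct numerical specialization of Theorem \ref{kt1} at $\lambda = 1$, since $\mathcal{K}_e(p) = \mathcal{K}_{1\cdot e}(\hat{p})$ with $\hat{p} = p$. The only structural point to settle first is which branch of the theorem applies: because $\lambda = 1$ lies in the interval $[\frac{4}{5}, \frac{\pi}{2}]$, the relevant estimate for $\abs{a_4 - a_3}$ is $\Theta_2(1,p)$ rather than $\Theta_1$. I would record this case distinction at the outset and then carry out the substitutions.

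First I would substitute $\lambda = 1$ into \eqref{00a32}. Here $3\lambda + 2 = 5$, so the bound becomes $\frac{1}{12}\left[2 + 6p - 5p^2\right] = \frac{1}{12}(-5p^2 + 6p + 2)$, which is exactly \eqref{bb1}.

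Next, for the successive-coefficient difference I would evaluate the threshold $\frac{8}{4 + 5\lambda}$ at $\lambda = 1$, obtaining $\frac{8}{9}$; this fixes the two sub-intervals appearing in \eqref{bb2}. Substituting $\lambda = 1$ into each branch of $\Theta_2(\lambda, p)$ and collecting coefficients then gives the stated expressions. For $0 \leq p \leq \frac{8}{9}$ the quadratic coefficient becomes $75 + 24 - 48 = 51$ and the linear coefficient becomes $48 - 120 = -72$, yielding $\frac{1}{576}(7p^3 + 51p^2 - 72p + 96)$; for $\frac{8}{9} < p \leq 1$ the cubic, quadratic and linear coefficients become $-17 - 30 - 12 = -59$, $36 + 24 = 60$ and $30 + 12 = 42$ respectively, yielding $\frac{1}{144}(-59p^3 + 60p^2 + 42p - 24)$. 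These match \eqref{bb2}.

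Since the corollary is nothing more than this arithmetic specialization, there is no genuine obstacle to overcome: the only matters requiring care are confirming that $\lambda = 1$ selects the $\Theta_2$ branch and computing the threshold $\frac{8}{9}$ correctly. The sharpness assertion is inherited verbatim from the sharpness already established in Theorem \ref{kt1}, so no new extremal analysis is needed.
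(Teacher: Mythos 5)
Your proposal is correct and matches the paper's approach exactly: the paper obtains this corollary by setting $\lambda=1$ in Theorem \ref{kt1}, noting that $\lambda=1$ falls in the range $\tfrac{4}{5}\leq\lambda\leq\tfrac{\pi}{2}$ so that $\Theta_{2}$ applies, with threshold $\tfrac{8}{4+5\lambda}=\tfrac{8}{9}$. Your coefficient arithmetic ($75+24-48=51$, $48-120=-72$, $-17-30-12=-59$, $36+24=60$, $30+12=42$) is accurate, and the sharpness is indeed inherited from the theorem.
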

Now, we denote by
\begin{equation}
{\mathcal{K}_e}^{*}(+)=\bigcup_{0\leq p \leq 1}{\mathcal{K}_{e}(p)}=\left\{f:f\in\mathcal{K}_{e}, f^{\prime\prime}(0)=p\right\}.
\end{equation}
By virtue of \eqref{bb1} and \eqref{bb2}, we easily find that
\begin{equation}
\sup_{f\in{\mathcal{K}_e}^{*}(+)}\abs{a_3(f)-a_2(f)}=\frac{19}{60},
\end{equation}
and
\begin{equation}
\sup_{f\in{\mathcal{K}_e}^{*}(+)}\abs{a_4(f)-a_3(f)}=\frac{1}{6}.
\end{equation}

Finally, we will give the upper bounds of $\abs{a_{n}}(n\geq2)$ for functions in the class $\mathcal{S}_{\lambda e}^{*}$ and $\mathcal{K}_{\lambda e}$. However, they are not always sharp.
 \begin{theorem}\label{t1}
If $f(z)=z+\sum_{n=2}^{\infty}a_{n}z^n\in\mathcal{S}_{\lambda e}^{*}$, then
 \begin{equation}\label{a211}
\abs{a_{n}}\leq\frac{1}{(n-1)!}\prod_{k=0}^{n-2}(\lambda+k)\quad (n\geq2).
 \end{equation}
\end{theorem}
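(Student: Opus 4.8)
The plan is to combine the Rogosinski-type coefficient estimate of Lemma \ref{Lem02} with the recursive bound packaged in Lemma \ref{Lem5}. Since $f\in\mathcal{S}_{\lambda e}^{*}$, the defining subordination \eqref{a1} lets me write
\[
\frac{zf^{\prime}(z)}{f(z)}=\mathfrak{q}(z)\prec e^{\lambda z},\qquad \mathfrak{q}(z)=1+\sum_{n=1}^{\infty}b_{n}z^{n}.
\]
The function $e^{\lambda z}=1+\lambda z+\cdots$ is the Ma-Minda function defining the class, univalent with convex range, and its first Taylor coefficient equals $\lambda$; hence Lemma \ref{Lem02} applies and delivers the uniform estimate $\abs{b_{n}}\leq\lambda$ for every $n\geq1$. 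This single estimate is the crucial input, and justifying it (that is, the convex-range property of $e^{\lambda z}$ that licenses Lemma \ref{Lem02}) is really the only delicate point of the whole argument.

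Next I would convert the quotient into a product. Setting $a_1=1$ and clearing the denominator gives $zf^{\prime}(z)=f(z)\,\mathfrak{q}(z)$, and comparing the coefficient of $z^{n}$ on both sides produces $na_{n}=a_{n}+\sum_{j=1}^{n-1}a_{j}b_{n-j}$, that is,
\[
(n-1)a_{n}=\sum_{j=1}^{n-1}a_{j}b_{n-j}\qquad(n\geq2).
\]
Every index $n-j$ occurring here is at least $1$, so each factor satisfies $\abs{b_{n-j}}\leq\lambda$. Taking moduli, using the triangle inequality together with $\abs{a_1}=1$, I obtain the master inequality
\[
(n-1)\abs{a_{n}}\leq\lambda\sum_{j=1}^{n-1}\abs{a_{j}}=\lambda\left(1+\sum_{j=2}^{n-1}\abs{a_{j}}\right)\qquad(n\geq2).
\]

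Finally I would run an induction tailored to match the recurrence \eqref{26}. For $n=2$ the inequality above reads $\abs{a_2}\leq\lambda=A_2$, the base case. Assuming $\abs{a_j}\leq A_j$ for all $2\leq j\leq n-1$, the master inequality together with the defining relation $(n-1)A_{n}=\lambda\bigl(1+\sum_{k=2}^{n-1}A_{k}\bigr)$ gives
\[
(n-1)\abs{a_{n}}\leq\lambda\left(1+\sum_{j=2}^{n-1}A_{j}\right)=(n-1)A_{n},
\]
so $\abs{a_{n}}\leq A_{n}$ for all $n\geq2$. Invoking the closed form from Lemma \ref{Lem5}, namely $A_{n}=\frac{1}{(n-1)!}\prod_{k=0}^{n-2}(\lambda+k)$, turns this into the desired bound \eqref{a211}. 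The loss of sharpness flagged in the statement is intrinsic to the very first step: replacing each $b_{n-j}$ by its modulus bound $\lambda$ and then summing cannot in general be realized by a single extremal Schwarz function simultaneously across all the coefficients, so the inductive estimate is expected to be attained only for small $n$.
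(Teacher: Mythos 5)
Your proposal is correct and follows essentially the same route as the paper's own proof: bound the coefficients of $zf^{\prime}(z)/f(z)$ by $\lambda$ via Rogosinski's result (Lemma \ref{Lem02}), extract the recursion $(n-1)\abs{a_{n}}\leq\lambda\bigl(1+\sum_{k=2}^{n-1}\abs{a_{k}}\bigr)$ from $zf^{\prime}=\psi f$, and close the induction with the sequence $\{A_{m}\}$ and its closed form from Lemma \ref{Lem5}. Your unified formulation $(n-1)a_{n}=\sum_{j=1}^{n-1}a_{j}b_{n-j}$ for all $n\geq2$ is only a cosmetic repackaging of the paper's split into $a_{2}=c_{1}$ and the case $n\geq3$.
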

\begin{proof}
Let
\begin{equation*}
\psi(z):=\frac{zf^{\prime}(z)}{f(z)}=1+\sum_{n=1}^{\infty}c_{n}z^{n}.
\end{equation*}
Since $f\in \mathcal{S}_{\lambda e}^{*}$, we know that
\begin{equation*}
\psi(z)\prec \chi(z)=e^{\lambda z}=1+\lambda z+\frac{\lambda^2}{2!}z^2+\cdots.
\end{equation*}
Since $\chi(z)$ is univalent and convex in $\D$ for $0<\lambda\leq\frac{\pi}{2}$, by Lemma \ref{Lem02}, we obtain
\begin{equation*}
\abs{c_{n}}\leq \lambda\quad (n\geq1).
\end{equation*}
In view of $zf^{\prime}(z)=\psi(z)f(z)$, by comparing the coefficients of $z^{n}$ on both sides, it follows that
\begin{equation*}
a_{2}=c_{1}
\end{equation*}
and
\begin{equation*}
(n-1)a_{n}=c_{n-1}+\sum_{k=2}^{n-1}c_{n-k}a_{k}\quad(n\geq3).
\end{equation*}
Thus, we have
\begin{equation*}
\abs{a_{2}}=\abs{c_{1}}\leq\lambda,
\end{equation*}
and
\begin{equation}\label{219}
\abs{a_{n}}\leq\frac{\lambda}{n-1}\left(1+\sum_{k=2}^{n-1}\abs{a_{k}}\right)\quad(n\geq3).
\end{equation}
Now, we define the
sequence $\{A_{m}\}_{m=2}^{\infty}$ as follows:
 \begin{equation}\label{220}
 \begin{cases}
\displaystyle A_{m}=\lambda&(m=2),\\
\displaystyle A_{m}=\frac{\lambda}{m-1}\left(1+\sum_{k=2}^{m-1}A_{k}\right)&\left(m\geq3\right).
 \end{cases}
 \end{equation}
 In order to prove that
 \begin{equation}
 \abs{a_{m}}\leq A_{m}\quad(m\geq2),
 \end{equation}
 we use the principle of mathematical induction. It is easy to verify that
 \begin{equation}
 \abs{a_{2}}\leq A_{2}=\lambda.
 \end{equation}
 Thus, assuming that
 \begin{equation}\label{223}
 \abs{a_{l}}\leq A_{l}\quad(l=2,3,\ldots,m),
 \end{equation}
 we find from \eqref{219} and \eqref{223} that
 \begin{equation}
 \abs{a_{m+1}}\leq\frac{\lambda}{m}\left(1+\sum_{k=2}^{m}\abs{a_{k}}\right)\leq\frac{\lambda}{m}\left(1+\sum_{k=2}^{m}A_{k}\right)=A_{m+1}.
 \end{equation}
 Therefore, by the principle of mathematical induction, we have
 \begin{equation}\label{227}
 \abs{a_{m}}\leq A_{m}\quad(m\geq2).
 \end{equation}
 By means of Lemma \ref{Lem5} and \eqref{220}, we see that
 \begin{equation}\label{228}
 A_{m}=\frac{1}{(m-1)!}\prod_{k=0}^{m-2}(\lambda+k)\quad\left(m\geq2\right).
 \end{equation}
 Combining \eqref{227} with \eqref{228}, we readily get the coefficient estimates \eqref{a211} asserted by Theorem \ref{t1}.
 \end{proof}
According to the relationship between the classes $\mathcal{S}_{\lambda e}^{*}$ and $\mathcal{K}_{\lambda e}$, we easily obtain the follow result.
 \begin{theorem}\label{kkt1}
If $f(z)=z+\sum_{n=2}^{\infty}a_{n}z^n\in\mathcal{K}_{\lambda e}$, then
 \begin{equation*}
\abs{a_{n}}\leq\frac{1}{n!}\prod_{k=0}^{n-2}(\lambda+k)\quad (n\geq2).
 \end{equation*}
\end{theorem}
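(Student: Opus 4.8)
The plan is to exploit the Alexander-type relation recorded just before the statement: if $f\in\mathcal{K}_{\lambda e}$ then $g:=zf^{\prime}\in\mathcal{S}_{\lambda e}^{*}$. Since the coefficient bound for the starlike class has already been established in Theorem \ref{t1}, the entire argument reduces to transporting that estimate through the operator $f\mapsto zf^{\prime}$ and reading off the coefficients; no fresh subordination or extremal analysis is needed.

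First I would write out the series for $g$. From $f(z)=z+\sum_{n=2}^{\infty}a_n z^n$ we have $f^{\prime}(z)=1+\sum_{n=2}^{\infty}n a_n z^{n-1}$, hence
\begin{equation*}
g(z)=zf^{\prime}(z)=z+\sum_{n=2}^{\infty}n a_n z^{n}.
\end{equation*}
Writing $g(z)=z+\sum_{n=2}^{\infty}b_n z^n$, I identify $b_n=n a_n$ for every $n\geq2$. Because $g\in\mathcal{S}_{\lambda e}^{*}$, Theorem \ref{t1} yields
\begin{equation*}
\abs{b_n}\leq\frac{1}{(n-1)!}\prod_{k=0}^{n-2}(\lambda+k)\qquad(n\geq2).
\end{equation*}

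The final step is simply to divide by $n$. Since $\abs{a_n}=\abs{b_n}/n$, the bound above becomes
\begin{equation*}
\abs{a_n}\leq\frac{1}{n}\cdot\frac{1}{(n-1)!}\prod_{k=0}^{n-2}(\lambda+k)=\frac{1}{n!}\prod_{k=0}^{n-2}(\lambda+k)\qquad(n\geq2),
\end{equation*}
which is precisely the asserted inequality. There is essentially no obstacle here: the substantive work — the inductive estimate resting on the subordination $zf^{\prime}/f\prec e^{\lambda z}$, Lemma \ref{Lem02}, and the closed form supplied by Lemma \ref{Lem5} — has already been carried out in Theorem \ref{t1}, so the convex case drops out as an immediate corollary of the equivalence $f\in\mathcal{K}_{\lambda e}\Leftrightarrow zf^{\prime}\in\mathcal{S}_{\lambda e}^{*}$. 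The only point demanding even mild attention is the bookkeeping relation $b_n=n a_n$, which introduces the extra factor $1/n$ and thereby promotes the $(n-1)!$ in Theorem \ref{t1} to the $n!$ appearing in the present statement.
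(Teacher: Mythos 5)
Your proposal is correct and coincides with the paper's own (implicit) argument: the authors derive Theorem \ref{kkt1} precisely from the relation that $f\in\mathcal{K}_{\lambda e}$ implies $zf^{\prime}\in\mathcal{S}_{\lambda e}^{*}$, applying Theorem \ref{t1} to $zf^{\prime}$ and dividing the coefficient bound by $n$. The bookkeeping $b_n=na_n$ and the resulting promotion of $(n-1)!$ to $n!$ are exactly as you describe.
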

\vskip.12in

\begin{center}{\sc Acknowledgments}
\end{center}
\vskip.03in

L. Shi was supported by the \textit{Foundation for Excellent Youth Teachers of Colleges and Universities of Henan  Province} under Grant no. 2019GGJS195 and the \textit{Key Project of Natural Science Foundation of
Educational Committee of Henan Province} under Grant no. 20B110001 of the P. R. China. Z.-G. Wang was supported by the \textit{Key Project of Education Department of Hunan Province} under Grant no.
19A097 of the P. R. China. The authors would like to thank the referees for their valuable comments and suggestions, which was essential to improve the quality of this paper.

 \vskip.10in


\begin{thebibliography}{00}

\bibitem{ca}
F. Carlson, Sur les coefficients d'une fonction born\'{e}e dans le cercle unit\'{e}, \textit{Ark. Mat. Abst. Fys.,} {\textbf 27A} No. 1, 8 pp, (1940).


\vskip.05in
\bibitem{go}
A. W. Goodman, \textit{Univalent functions}. Vol. II., Mariner Publishing Co., Inc., Tampa, FL, 1983.

\vskip.05in
\bibitem{HAY}
W. K. Hayman, On successive coefficients of univalent functions, \textit{J. London Math. Soc.} {\bf38} (1963),
228--243.

\vskip.05in
\bibitem{KU}
S. Kumar, V. Ravichandran and S. Verma, Initial coefficients of starlike functions with real coefficients, \textit{Bull.
Iranian Math. Soc.} {\textbf 43} (2017), 1837--1854.
\vskip.05in
\bibitem{SU}
S. Kumar and V. Ravichandran, Subordinations for functions with positive real part, \textit{Complex Anal. Oper. Theory} {\bf 12} (2018), 1179--1191.
\vskip.05in
\bibitem{LEU}
Y. Leung, Successive coefficients of starlike functions, \textit{Bull. London Math. Soc.} {\bf 10} (1978), 193--196.
\vskip.05in
\bibitem{li}
M. Li and T. Sugawa, A note on successive coefficients of convex functions, \textit{Comput. Methods
Funct. Theory} {\bf 17} (2017), 179--193.

\vskip.05in
\bibitem{LIB}
R. J. Libera and E. J. Z{\l}otkiewicz, Early coefficients of the inverse of a regular convex function,
\textit{Proc. Amer. Math. Soc.} {\bf 85} (1982), 225--230.

\vskip.05in
\bibitem{LIB2}
R. J. Libera and E. J. Z{\l}otkiewicz, Coefficient bounds for the inverse of a function with derivatives in $P$,
\textit{Proc. Amer. Math. Soc.} {\bf 87} (1983), 251--257.




 \vskip.05in
  \bibitem{ma}
W. Ma and D. Minda, An internal geometric characterization of strongly starlike functions, \textit{Ann. Univ.
Mariae Curie-Sk{\l}odowska (Sect. A)}  {\textbf 45} (1991), 89--97.


\vskip.05in
\bibitem{me}
R. Mendiratta, S. Nagpal and V. Ravichandran, On a subclass of strongly starlike functions associated with
exponential function, \textit{Bull. Malays. Math. Sci. Soc.} {\textbf 38} (2015), 365--386.

\vskip.05in
\bibitem{AD}
A. Naz, S. Kumar and V. Ravichandran, Coefficient and radius estimates of starlike functions with positive real part, preprint, arXiv:1905.05413.






\vskip.05in
\bibitem{peng}
Z.-G. Peng and M. Obrandovi\'{c}, The estimate of the difference of initial successive coefficients of univalent functions, \textit{J.
Math. Inequal.} {\textbf 13} (2019), 301--314.

\vskip.05in
\bibitem{pr}
D. V. Prokhorov and J. Szynal, Inverse coefficients for $(\alpha,\beta)$-convex functions, \textit{Ann. Univ. Mariae Curie-Skodowska (Sect. A)} {\textbf 35} (1981), 125--143.


\vskip.05in
\bibitem{ROB}
M. S. Robertson, Univalent functions starlike with respect to a boundary point, \textit{J. Math. Anal.
Appl.} {\bf 81} (1981), 327--345.

\vskip.05in
\bibitem{ro}
W. Rogosinski,  On the coefficients of subordinate functions, \textit{Proc. Lond. Math. Soc.} {\textbf 48} (1943), 48--82.




\vskip.05in
\bibitem{SH}
L. Shi, H. M. Srivastava, M. Arif, S. Hussain and H. Khan, An investigation of the third Hankel determinant problem for certain subfamilies of univalent functions involving the exponential function, \textit{Symmetry} {\bf 11} (2019), Art. 598.




\vskip.05in
\bibitem{HU}
H. Tang and G.-T. Deng, Majorization problems for two subclasses of analytic functions connected with the Liu-Owa integral operator and exponential function, \textit{J. Inequal. Appl.} \textbf{2018} (2018): 277.


\vskip.05in
\bibitem{wsy}
Z.-G. Wang, H. M. Srivastava and S.-M. Yuan, Some basic properties of certain subclasses of meromorphically starlike functions,
     \textit{J. Inequal. Appl.} \textbf{2014} (2014): 29.

\vskip.05in
\bibitem{YE}
Z. Ye, On successive coefficients of odd univalent functions, \textit{Proc. Amer. Math. Soc.} {\bf 133} (2005),
3355--3360.
\vskip.05in
\bibitem{YE2}
Z. Ye, On the successive coefficients of close-to-convex functions, \textit{J. Math. Anal. Appl.} {\bf 283} (2003),
689--695.

\vskip.05in
\bibitem{pa}
P. Zaprawa, Hankel determinants for univalent functions related to the exponential function, \textit{Symmetry} {\bf 11} (2019), Art. 1211.

\vskip.05in
\bibitem{ZH}
H.-Y. Zhang, H. Tang and X.-M. Niu, Third-order Hankel determinant for certain class of
analytic functions related with exponential function, \textit{Symmetry} {\bf 10} (2018), Art. 501.
\end{thebibliography}
\end{document}